\documentclass[11pt, a4paper]{article}

\usepackage{amsmath,amssymb}
\usepackage[utf8]{inputenc}
\usepackage[colorlinks=true, citecolor=blue]{hyperref}
\usepackage{amsthm}
\usepackage{enumerate}
\usepackage{fullpage}
\usepackage{color}
\usepackage{graphicx}
\usepackage{mathtools}
\usepackage{subcaption}
\usepackage{tikz}
\usetikzlibrary{decorations.markings}
\numberwithin{equation}{section}
\usepackage{float}
\usepackage[capitalize,nameinlink,noabbrev]{cleveref}
\usepackage{array}

\usepackage[affil-it]{authblk}

\newtheorem{theorem}{Theorem}
\newtheorem{lemma}[theorem]{Lemma}
\newtheorem{corollary}[theorem]{Corollary}

\usepackage{array}
\usepackage{verbatim}

\newcommand{\olx}{\overline{x}}
\newcommand{\oly}{\overline{y}}
\newcommand{\olxy}{\overline{xy}}

\DeclareMathOperator{\bigO}{O}

\title{Quarter-plane lattice paths with interacting boundaries: the Kreweras and reverse Kreweras models}

\author{Nicholas R. Beaton\thanks{\href{mailto:nrbeaton@unimelb.edu.au}{nrbeaton@unimelb.edu.au}}}
\author{Aleksander L. Owczarek\thanks{\href{mailto:owczarek@unimelb.edu.au}{owczarek@unimelb.edu.au}}}
\author{Ruijie Xu\thanks{\href{mailto:ruijiex1@student.unimelb.edu.au}{ruijiex1@student.unimelb.edu.au}}}

\affil{\small School of Mathematics and Statistics\\The University of Melbourne, VIC 3010, Australia}

\RequirePackage[maxbibnames=99,sorting=nyt,giveninits,maxnames=10,backend=bibtex]{biblatex}
\renewbibmacro{in:}{}

\DeclareFieldFormat[misc]{title}{\mkbibquote{#1}}
\DeclareFieldFormat[article]{volume}{\mkbibbold{#1}}
\DeclareFieldFormat{url}{\href{#1}{Link}}
\DeclareFieldFormat{doi}{\href{http://dx.doi.org/#1}{Link}}
\DeclareFieldFormat{eprint:arxiv}{\href{https://arxiv.org/abs/#1}{arXiv}}
\AtEveryBibitem{\clearfield{issn}\clearfield{isbn}}

\addbibresource{reportbib.bib}

\begin{document}
\maketitle
\abstract{
Lattice paths in the quarter plane have led to a large and varied set of results in recent years. One major project has been the classification of step sets according to the properties of the corresponding generating functions, and this has involved a variety of techniques, some highly intricate and specialised. The famous Kreweras and reverse Kreweras walk models are two particularly interesting models, as they are among the only four cases which have algebraic generating functions.

Here we investigate how the properties of the Kreweras and reverse Kreweras models change when boundary interactions are introduced. That is, we associate three real-valued weights $a,b,c$ with visits by the walks to the $x$-axis, the $y$-axis and the origin $(0,0)$ respectively. These models were partially solved in a recent paper by Beaton, Owczarek and Rechnitzer (2019). We apply the algebraic kernel method to completely solve these two models. We find that reverse Kreweras walks have an algebraic generating function for all $a,b,c$, regardless of whether the walks are restricted to end at the origin or on one of the axes, or may end anywhere at all. For Kreweras walks, the generating function for walks returning to the origin is algebraic, but the other cases are only D-finite. To our knowledge this is the first example of a quarter-plane model with this property.

\

\noindent \textbf{Keywords:} random walks; enumeration; generating functions; kernel method; algebraic functions; D-finite functions
}

\section{Introduction}\label{sec:intro}

Lattice path models with boundary conditions have been studied widely. As a combinatorial problem, they are closely related to probability theory, algebra, complex analysis and statistical physics~\cite{bousquet2005walks,bousquet2010walks,fayolle1999random,kurkova2012functions,mishna2009classifying}. The typical goal is to study the properties of random walks with steps in a fixed set $S$. These properties include the number of paths of a certain length, the generating function, the asymptotic behaviour and bijections with other combinatorial objects.

The generating function of simple walks in the bulk can be written down directly with simple calculations. Once the boundary conditions are introduced, the problem becomes more complicated and some interesting results appear. In~\cite{banderier2002basic}, Banderier and Flajolet prove that for walks in a half plane (walks with one boundary constraint), the generating function is always algebraic. For quarter-plane walks with small steps, Bousquet-M{\'e}lou and Mishna~\cite{bousquet2010walks} associated each step set $S$ with a group, and proved that among all $79$ non-isomorphic (and non-trivial) quarter-plane models, exactly $23$ have a finite group and the remaining $56$ have an infinite group. A walk model with a finite group can be solved by the kernel method~\cite{prodinger2004kernel} and all of them have D-finite generating functions. Four specific models: Kreweras, reverse Kreweras, double Kreweras, and Gessel, have algebraic generating functions. Models whose associated group is infinite are far more difficult to solve (find an explicit expression for the generating function), but many properties can still be discussed~\cite{dreyfus_nature_2018,mishna2009two}.
Recently, walks avoiding a quarter-plane have also been studied~\cite{bousquet2016square,raschel_walks_2019}.

The connection with statistical physics can be seen in a recent publication~\cite{tabbara2013exact}. The authors  studied a two-dimensional model of  interacting directed polymers above an impenetrable surface. Two walks starting at the origin can walk north-east or south-east with step length $1$. Weights (Boltzmann weights) are assigned when the two walks touch or when any either touches the surface.

This interacting directed walk model has a bijection to a certain quarter-plane path problem (see \cref{fig0}). Let the generating variable of half distance between two walks be $s$ and the generating variable of half distance between the lower walk and the surface be $r$. If we have the following bijection:
\[\genfrac(){0pt}{0}{\nearrow}{\nearrow} \mapsto (\rightarrow) \qquad \genfrac(){0pt}{0}{\nearrow}{\searrow} \mapsto (\nwarrow) \qquad \genfrac(){0pt}{0}{\searrow}{\searrow} \mapsto (\leftarrow) \qquad \genfrac(){0pt}{0}{\searrow}{\nearrow} \mapsto (\searrow)\]
 Then, the directed walk with generating variable $(s,r)$ corresponds to a quarter-plane walk whose allowed steps are east, west, northwest and southeast. This quarter-plane walk model has interactions when the walk touches the $x$ axis and the $y$ axis. The weights are the same as in the corresponding directed walk model.

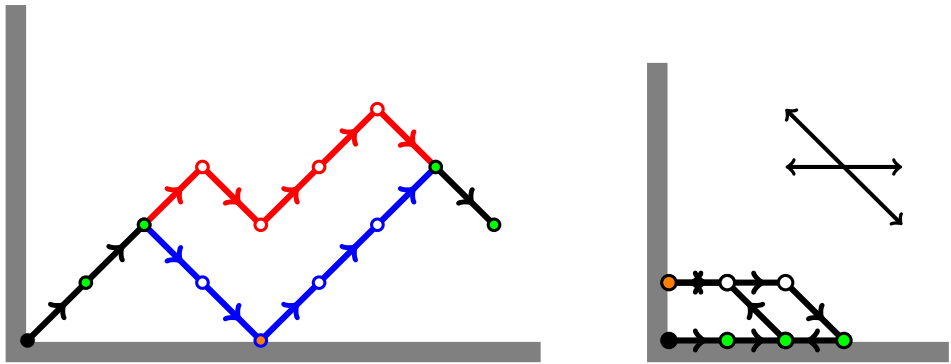
\begin{figure}
\centering
\resizebox{0.8\textwidth}{!}{
\begin{tikzpicture}
\tikzset{12/.style={circle, line width=1.5pt, draw=black, fill=green, inner sep=2pt}}
\tikzset{1/.style={circle, line width=1.5pt, draw=blue, fill=white, inner sep=2pt}}
\tikzset{2/.style={circle, line width=1.5pt, draw=red, fill=white, inner sep=2pt}}
\tikzset{1x/.style={circle, line width=1.5pt, draw=blue, fill=orange, inner sep=2pt}}
\tikzset{12x/.style={circle, line width=1.5pt, draw=black, fill=black, inner sep=2pt}}
\draw [gray, line width=10pt] (-0.2,5.8) -- (-0.2,-0.2) -- (8.8,-0.2);
\begin{scope}[line width=3pt, decoration={markings,mark=at position 0.65 with {\arrow{>}}}]
\draw [postaction=decorate] (0,0) node [12x] {} -- (1,1);
\draw  [postaction=decorate] (1,1) node [12] {} -- (2,2);
\draw [blue, postaction=decorate] (2,2) node [12] {} -- (3,1);
\draw [blue, postaction=decorate] (3,1) node [1] {} -- (4,0);
\draw [blue, postaction=decorate] (4,0) node [1x] {} -- (5,1);
\draw [blue, postaction=decorate] (5,1) node [1] {} -- (6,2);
\draw [blue,postaction=decorate] (6,2) node [1] {} -- (7,3);
\draw [red, postaction=decorate] (2,2)  -- (3,3);
\draw [red, postaction=decorate] (3,3) node [2] {} -- (4,2);
\draw [red, postaction=decorate] (4,2) node [2] {} -- (5,3);
\draw [red, postaction=decorate] (5,3) node [2] {} -- (6,4);
\draw [red, postaction=decorate] (6,4) node [2] {} -- (7,3);
\draw [ postaction=decorate] (7,3) node [12] {} -- (8,2);
\draw  (2,2) node [12] {};
\draw  (8,2) node [12] {};
\end{scope}
\begin{scope}[xshift=11cm]
\tikzset{ac/.style={circle, line width=1.5pt, draw=black, fill=green, inner sep=2.5pt}}
\tikzset{bc/.style={circle, line width=1.5pt, draw=black, fill=orange, inner sep=2.5pt}}
\tikzset{cc/.style={circle, line width=1.5pt, draw=black, fill=black, inner sep=2.5pt}}
\tikzset{vert/.style={circle, line width=1.5pt, draw=black, fill=white, inner sep=2.5pt}}
\draw [gray, line width=10pt] (-0.2,4.8) -- (-0.2,-0.2) -- (4.8,-0.2);
\begin{scope}[line width=3pt, decoration={markings,mark=at position 0.65 with {\arrow{>}}}]
\draw [postaction=decorate] (0,0) node [cc] {} -- (1,0);
\draw [postaction=decorate] (1,0) node [ac] {} -- (2,0);
\draw [postaction=decorate] (2,0) node [ac] {} -- (1,1);
\draw [postaction=decorate] (1,1) node [vert] {} -- (0,1);
\draw [postaction=decorate] (0,1) node [bc] {} -- (1,1);
\draw [postaction=decorate] (1,1) node [vert] {} -- (2,1);
\draw [postaction=decorate] (2,1) node [vert] {} -- (3,0);
\draw [postaction=decorate] (3,0) node [ac] {} -- (2,0);
\draw  (2,0) node [ac] {};
\end{scope}
\draw [line width=2pt, <->] (2,3) -- (3,3) -- (4,2);
\draw [line width=2pt, <->] (2,4) -- (3,3)--(4,3);
\end{scope}
\end{tikzpicture}
}
\caption{Left: A pair of interacting directed paths with weights associated with shared vertices and visits to the bottom boundary. Right: The corresponding quarter-plane lattice path with weights associated with visits to the two boundaries.}\label{fig0}
\end{figure}

We may expand this idea. As there is a rich body of literature on non-interacting quarter-plane walks, what happens if we introduce the interaction into other quarter-plane walk problems? In a recent work~\cite{beaton_exact_2019}, Beaton, Owczarek and Rechnitzer introduced interacting boundary weights to quarter-plane walk models and studied the $23$ models which are associated with finite groups. In particular, they assigned weights $a$ and $b$ to the two axes and $ab$ to the origin. For some of the models, the generating function stays D-finite for all $a,b$. For some models, the generating function may be D-finite, may be D-algebraic or may be unsolvable depending on the values of $a$ and $b$.

In this paper, we focus on two specific cases: the reverse Kreweras and Kreweras models. It has been proved that the generating functions of these two models are algebraic without interactions, or with equal interactions  on two boundaries (that is, $a=b$)~\cite{beaton_exact_2019,bousquet2010walks}. For arbitrary $a,b$, it is still unclear, and it is this general case that we address here. (In fact we generalise further by associating weight $c$ with visits to the origin, instead of $ab$.) For these two models we obtain solutions by delicately combining two commonly-used tools -- the \emph{full-orbit sum} and the \emph{half-orbit sum} -- only one of which (the half-orbit sum) could be used for the unweighted and $a=b$ cases. The two other models which are algebraic in the absence of boundary weights -- double Kreweras and Gessel -- are technically more challenging and still remain unsolved.

The layout of this paper is as follows. In \cref{sec:themodel}, we will define some notation used in this paper and recall some general definitions. We will present the final result in \cref{sec:mainresult}. In \cref{sec:revkrew,sec:krew} we will walk through the whole process of the algebraic kernel method and solve the two problems. We will prove that the generating function of reverse Kreweras walks is still algebraic for all boundary weights, while the generating function of Kreweras walk is shown to be D-finite (and probably not algebraic).
In \cref{sec:discussion} we will discuss some remaining open problems.

Many calculations in this paper involve complicated rational, algebraic or D-finite expressions, some of which would take multiple pages to be written down explicitly. For this reason we have produced {\sc Mathematica} notebooks which go through all calculations and verify all equations. These are available at the first author's website.\footnote{\url{http://www.nicholasbeaton.com/papers}.}

\section{The model}\label{sec:themodel}

\subsection{Definitions and notation}

We first define some notation used in this paper. We write $[x^i]f(x)$ for the coefficient of $[x^i]$ in the Laurent series expansion of $f(x)$. Respectively, $[x^>]f(x)$, $[x^<]f(x)$ and $[x^\geq]f(x)$ are those terms with positive, negative and non-negative powers of $x$. We use the notation $\olx=x^{-1}$ and $\oly=y^{-1}$.

For a ring $\mathbb{K}$, we denote
\begin{enumerate}
\item $\mathbb{K}(t)$ as the set of polynomials in $t$ with coefficients in $\mathbb{K}$;
\item $\mathbb{K}((t))$ as the set of polynomials in $t$ and $1/t$ with coefficients in $\mathbb{K}$;
\item $\mathbb{K}[t]$ as the set of formal power series in $t$ with coefficients in $\mathbb{K}$;
\item $\mathbb{K}[[t]]$ as the set of Laurent series in $t$ with coefficients in $\mathbb{K}$;
\item $\mathbb{K}^{r}(t)$ as the set of rational functions in $t$ with coefficients in $\mathbb{K}$.
\end{enumerate}
The definition may extend to multiple variables. For example $\mathbb{R}^{r}(x)[[t]]$ refers to the set of  Laurent series in $t$ with coefficients in the ring of rational polynomials of $x$ with real coefficients.

Next, we define functions for counting lattice paths with site interactions on the boundaries.

We denote $q_{n,k,l,h,v,u}$ as the number of walks of length $n$ that start at $(0,0)$ and end at $(k,l)$ which visit the horizontal boundary (except the origin) $h$ times, the vertical boundary (except the origin) $v$ times and the origin $u$ times. The associated generating function is
\begin{equation}
Q(t;x,y;a,b,c)\equiv Q(x,y)=\sum_{n}t^n\sum_{k,l,h,v,u}q_{n,k,l,h,v,u}x^k y^l a^h b^v c^u\equiv \sum_{n}t^n Q_n(x,y).\label{function}
\end{equation}
We also define line-boundary terms:
\begin{equation}
[y^i]Q(t;x,y;a,b,c)\equiv Q_{-,i}(x)=\sum_{n}t^n\sum_{k,h,v,u}q_{n,k,i,h,v,u}x^k a^h b^v c^u.
\end{equation}
This is the generating function of walks ending on the line $y=i$. $Q_{i,-}(y)$ is defined similarly.

Furthermore, we define the generating functions of walks ending on a diagonal line:
\begin{equation}
 Q^d_j(x)=\sum_{n}t^n\sum_{i,h,v,u}q_{n,i,i+j,h,v,u}x^i a^hb^v c^u.
\end{equation}
Note that the variable $x$ in $Q^d_j(x)$ marks the $x$-coordinate of the endpoint of walks.

Finally we define point boundary terms:
\begin{equation}
[x^iy^j]Q(t;x,y;a,b,c)\equiv Q_{i,j}=\sum_{n}t^n\sum_{h,v,u}q_{n,i,j,h,v,u}a^hb^v c^u.
\end{equation}
This is the generating function of walks ending at point $(i,j)$.

\subsection{The general functional equation}

Consider a walk starting from the origin with allowed steps $\mathcal{S} \subseteq \{-1,0,1\}^2$. This set of steps is usually denoted $\{\mathrm{N,S,W,E,NE,NW,SE,SW}\}$.

The \emph{step generator} $S$ is
\begin{equation}
S(x,y) = \sum_{(i,j)\in\mathcal{S}}x^i y^j.
\end{equation}
This can be written as
\begin{equation}
S(x,y)=A_{-1}(x)\oly+A_0(x)+A_1(x)y=B_{-1}(y)\olx+B_0(y)+B_{1}(y)x\label{S}
\end{equation}
where for example $A_{-1}(x)$ is $[y^{-1}]S(x,y)$, which refers to the steps going southwards, including $\{\mathrm{S,SE,SW}\}$ steps.

Since the quarter-plane walk is restricted in the first quadrant, it is allowed to touch the axes but not cross them. We denote  $A(x,y)=A_{-1}(x)\oly$ as the illegal steps crossing the $x$-axis, $B(x,y)=B_{-1}(y)\olx$ as the illegal steps crossing the $y$-axis and $G(x,y)=[x^{-1}y^{-1}]S(x,y)$ as the illegal steps crossing the origin diagonally.

By the geometric properties of quarter-plane walks and the boundary conditions, we can derive a functional equation satisfied by the generating function. 
\begin{theorem}
For a lattice walk restricted to the quarter-plane, starting from the origin, with weight $a$  associated with vertices on the $x$-axis (except the origin), weight $b$ associated with vertices on $y$-axis (except the origin) and weight $c$ associated with the origin, the generating function $Q(x,y)$ satisfies the following functional equation:
\begin{multline}
K(x,y)Q(x,y)=\frac1c + \frac1a(a-1 - taA(x,y))Q(x,0) + \frac1b(b - 1 - tbB(x,y))Q(0,y) \\
+\left(\frac{1}{abc}(ac+bc-ab-abc)+tG(x,y)\right)Q(0,0)\label{functional}
\end{multline}
where $K(x,y)=1-tS(x,y)$. It is called the \emph{kernel} of the generating function.
\end{theorem}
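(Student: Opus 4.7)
The plan is a last-step decomposition of walks, with careful tracking of the weight $w(q)\in\{1,a,b,c\}$ picked up each time the walk enters a vertex $q$ (according to whether $q$ is interior, on the positive $x$-axis, on the positive $y$-axis, or at the origin). The first step is to introduce an auxiliary generating function $\hat Q(x,y)$ enumerating the same walks as $Q$ but with the boundary weight of the endpoint stripped off for every walk of positive length; the empty walk contributes $1$ to both $Q$ and $\hat Q$. Writing $Q_n^p$ for the weighted count of walks of length $n$ ending at $p\in\mathbb{Z}_{\geq 0}^2$, this means $Q_n^p = w(p)\hat Q_n^p$ for $n\geq 1$.

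Next I would establish a recursion for $\hat Q$ by the standard last-step argument: any length-$(n+1)$ walk ending at $q$ arises from a length-$n$ walk ending at some $p$ followed by a step $s\in\mathcal S$ with $q=p+s$, and the new walk's weight is exactly $w(q)$ times that of the shorter walk. Stripping $w(q)$ gives $\hat Q_{n+1}^q = \sum_{s\in\mathcal S,\ q-s\in\mathbb Z_{\geq 0}^2} Q_n^{q-s}$. Multiplying by $x^{q_x}y^{q_y}$, summing over $q$, and enforcing $p\in\mathbb Z_{\geq 0}^2$ via inclusion-exclusion on the $s_x=-1$ and $s_y=-1$ cases leads to
\[
\hat Q(x,y) = 1 + t\bigl(S(x,y)Q(x,y) - A(x,y)Q(x,0) - B(x,y)Q(0,y) + G(x,y)Q(0,0)\bigr).
\]

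The third step is to invert the algebraic relation between $Q$ and $\hat Q$. Summing $Q_n^p = w(p)\hat Q_n^p$ over endpoints (split into interior, positive $x$-axis, positive $y$-axis, and origin) and over all lengths (including the empty walk) gives
\[
Q = \hat Q + (a-1)\bigl(\hat Q(x,0)-\hat Q(0,0)\bigr) + (b-1)\bigl(\hat Q(0,y)-\hat Q(0,0)\bigr) + (c-1)\bigl(\hat Q(0,0)-1\bigr).
\]
Specialising to $x=y=0$ yields $\hat Q(0,0) = (Q(0,0)+c-1)/c$, and then $y=0$ gives $a\hat Q(x,0) = Q(x,0)-(c-a)\hat Q(0,0)+(c-1)$; this and its $y$-analogue determine $\hat Q(x,0)$ and $\hat Q(0,y)$. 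A short computation then telescopes $(a-1)(\hat Q(x,0)-\hat Q(0,0))$ down to $(1-1/a)(Q(x,0)-Q(0,0))$, and analogously for $b$ and $c$. Substituting back into the $\hat Q$-recursion and collecting coefficients reproduces the claimed identity: the constant becomes $1/c$, the coefficients of $Q(x,0)$ and $Q(0,y)$ become $1-1/a-tA$ and $1-1/b-tB$, and the coefficient of $Q(0,0)$ becomes $1/a+1/b-1/c-1+tG$.

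The main obstacle is purely bookkeeping: one must be strict about whether the endpoint's weight is included at each stage, because a naive direct recursion for $Q_{n+1}$ in terms of $Q_n$ would involve $[y^1]Q_n(x)$ (from walks stepping down onto the $x$-axis from $y=1$), and such terms cannot be eliminated in a closed-form functional equation using only $Q$, $Q(x,0)$, $Q(0,y)$, and $Q(0,0)$. Working through $\hat Q$ organises the calculation so that no such spurious terms ever arise.
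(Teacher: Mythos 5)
Your derivation is correct and is essentially the standard construction that the paper itself only cites (the last‑step decomposition with boundary‑weight corrections from Beaton–Owczarek–Rechnitzer, Theorem 6, adapted so the origin carries weight $c$); your auxiliary series $\hat Q$ with the endpoint weight stripped is just a clean way of organising that bookkeeping. I checked the coefficient collection at the end (constant $1/c$, coefficients $\tfrac1a(a-1-taA)$, $\tfrac1b(b-1-tbB)$ and $\tfrac1a+\tfrac1b-\tfrac1c-1+tG$, consistent with the weight‑free empty walk at $t=0$), and it all matches the stated equation.
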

\begin{proof}
This equation can be constructed the same way as~\cite[Theorem~6]{beaton_exact_2019}, except the weight at the origin is $c$ here instead of $ab$. 
\end{proof}
If we let $a=b=c=1$, we get the functional equation for quarter-plane walks without interactions. 

\section{Main results}\label{sec:mainresult}

For reverse Kreweras walks, the allowed steps are $\{\mathrm{SW,N,E}\}$. So we have 
\begin{equation}
S(x,y) = x + y + \olxy,
\end{equation} 
and hence
\begin{equation}
A(x,y) = B(x,y) = G(x,y) = \olxy.
\end{equation}
The functional equation~\eqref{functional} becomes
\begin{multline}
\left(1-t(x+y+\olxy)\right)Q(x,y)=\frac1c + \frac1a(a-1 - ta\olxy)Q(x,0) + \frac1b(b - 1 - tb\olxy)Q(0,y) \\
+\left(\frac{1}{abc}(ac+bc-ab-abc)+t\olxy\right)Q(0,0).\label{1x1x}
\end{multline}

Our aim is to solve this equation. From previous work~\cite{bousquet2005walks,bousquet2010walks,mishna2009classifying}, we know that when $a=b=c=1$, the generating function of reverse Kreweras walks is algebraic. We can use both algebraic kernel method and obstinate kernel method to prove this. Here, we extend the result to a more general case: 
\begin{theorem}\label{1111}
For arbitrary $a,b,c$, the generation function $Q(x,y)$ of reverse Kreweras walks is algebraic.
\end{theorem}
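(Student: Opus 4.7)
The plan is to apply the algebraic kernel method to equation~\eqref{1x1x}. The step generator $S(x,y)=x+y+\olxy$ is invariant under the two involutions
\[
\Phi:(x,y)\mapsto(\olxy,y), \qquad \Psi:(x,y)\mapsto(x,\olxy),
\]
which together generate the order-6 group of reverse Kreweras. Since $K(x,y)=1-tS(x,y)$ is therefore fixed by the six group elements, substituting each of them into~\eqref{1x1x} yields six functional equations whose left-hand sides are of the form $K(x,y)Q(\sigma(x,y))$ and whose right-hand sides are explicit combinations of $Q(x,0)$, $Q(0,y)$, $Q(0,0)$ evaluated along the orbit of $(x,y)$.

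I would then form two independent orbit-sum combinations. The signed \emph{half-orbit sum} (as used in the unweighted and $a=b$ cases) cancels $Q(x,y)$ itself from the equation after dividing by the kernel, leaving an identity among $Q(x,0)$, $Q(0,y)$ and $Q(0,0)$ of the type ``power-series in $t$ equals a rational expression in $x,y$''. When $a=b$ this identity, together with the $x\leftrightarrow y$ symmetry, is enough to determine the unknowns. For general $a\neq b$ that symmetry is broken and one equation is insufficient, so I would supplement it with a carefully chosen \emph{full-orbit sum} which produces a second, genuinely independent relation. Extracting positive/negative/constant parts with respect to $x$ (or $y$) from each relation, using the operators $[x^\geq],[x^<]$ introduced in Section~\ref{sec:themodel}, should give a closed linear system in the three boundary series.

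Once $Q(x,0)$, $Q(0,y)$ and $Q(0,0)$ are expressed as algebraic functions of $t$ (and the relevant boundary variable), algebraicity of $Q(x,y)$ follows at once, since~\eqref{1x1x} writes $Q(x,y)$ as a rational function in $x,y,t,a,b,c$ of these three quantities. The standard way to exhibit algebraicity of a boundary series is to specialise $x$ (or $y$) to a root of the kernel---so I would also need to identify and work with the unique root $Y(x)$ (resp.\ $X(y)$) of $K(x,y)=0$ that is a formal power series in $t$, confirm it is algebraic in $x$ and $t$, and check that all of the resulting substitutions are valid in the appropriate ring of Laurent series.

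The main obstacle will be the careful combinatorial bookkeeping forced by the asymmetry $a\neq b$: whereas for $a=b$ the half-orbit sum alone suffices because three of the six equations collapse into symmetric pairs, here all six must be tracked separately. Concretely, I expect the hardest step to be choosing the precise signs/weights of the two orbit-sum combinations so that, after kernel division and part extraction, the system for $Q(x,0)$, $Q(0,y)$ and $Q(0,0)$ is triangular (or at least uniquely solvable) rather than degenerate, and then verifying that the algebraic curve cut out by the resulting polynomial relations admits the correct power-series branch. The rest of the argument---deriving explicit polynomials satisfied by the boundary series and substituting back into~\eqref{1x1x}---should be routine, if computationally heavy, which is presumably why the authors have provided the accompanying \textsc{Mathematica} notebooks.
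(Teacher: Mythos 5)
Your high-level strategy---supplementing the half-orbit sum with a full-orbit sum to compensate for the broken $x\leftrightarrow y$ symmetry when $a\neq b$---is exactly the paper's key idea, and your remark that the half-orbit sum alone suffices only in the unweighted/$a=b$ cases is correct. However, the proposal has genuine gaps in how the argument would actually close. You describe the end state as a closed linear system in the three boundary series $Q(x,0)$, $Q(0,y)$, $Q(0,0)$, to be resolved by specialising $x$ (or $y$) to the small root of the original kernel $K(x,y)=0$. That is not how $K$ is disposed of here, and it is not where the difficulty lies. In the paper, $K$ is eliminated by dividing the orbit sums by $K(x,y)$ and extracting the $[y^0]$ coefficient via the partial-fraction expansion of $1/K$ (\cref{l1}); this extraction unavoidably introduces $\sqrt{\Delta(x)}$ and, crucially, the \emph{diagonal} generating functions $Q^{\mathrm d}_0(\olx)$ and $Q^{\mathrm d}_1(\olx)$ as additional unknown series. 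Your proposal never accounts for these diagonal unknowns, and without them the ``closed linear system'' you describe does not exist.

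Second, the positive/negative part extraction in $x$ that you invoke cannot be performed directly on the resulting identity \eqref{eqn:rk_halforbitsum_y0}: the term $\sqrt{\Delta(x)}\,Q^{\mathrm d}_0(\olx)$ is a Laurent series in $x$ multiplying an unknown series in $\olx$, so its positive part involves infinitely many unknowns. The missing ingredient is the canonical factorisation $\Delta=\Delta_0\Delta_+\Delta_-$ (\cref{lem:canonical_factorisation}): one must divide through by $\sqrt{\Delta_+}$ \emph{before} separating parts. Only then does one obtain two kernel-like equations with \emph{new} kernels $P_{x,0}$ and $P^{\mathrm d}_0$ (products of quadratics in $x$, not the original $K$), whose power-series roots are substituted to yield four linear equations in the finitely many point unknowns $Q(0,0)$, $Q_{0,1}$, $Q_{1,0}$. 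The proof is then completed by verifying that a suitable $3\times3$ determinant of coefficients is non-zero, which is delicate: one natural choice, $D_{1,3,5}$, actually vanishes, and the special lines $a=1$, $b=1$, $a+b=2$ need separate treatment. These steps are the substance of the proof rather than routine bookkeeping, so as written the proposal would stall at the separation stage.
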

We will solve reverse Kreweras walk generating function using the algebraic kernel method in the next section. The final solution is an algebraic expression.

\begin{corollary}\label{cor:krew00}
For Kreweras walks, the generating function $Q_{0,0} \equiv Q(0,0) $ is algebraic for all $(a,b,c)$.
\end{corollary}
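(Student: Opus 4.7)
The plan is to deduce this directly from \cref{1111} via a time-reversal bijection between Kreweras and reverse Kreweras walks that both begin and end at the origin.

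Given a Kreweras walk of length $n$ (with step set $\{\mathrm{NE,S,W}\}$) from $(0,0)$ to $(0,0)$, I reverse the sequence of steps and simultaneously flip each step's direction under the exchange $\mathrm{NE \leftrightarrow SW}$, $\mathrm{N \leftrightarrow S}$, $\mathrm{E \leftrightarrow W}$. The resulting walk has steps in $\{\mathrm{SW,N,E}\}$, so it is a reverse Kreweras walk; it still runs from $(0,0)$ to $(0,0)$; and it remains in the quarter plane, because it visits exactly the same sequence of lattice sites as the original, merely in reverse order. This map is a length-preserving involution between Kreweras walks and reverse Kreweras walks from $(0,0)$ to itself. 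Since the two walks visit the same multiset of sites, the numbers of visits to the $x$-axis (excluding the origin), the $y$-axis (excluding the origin), and the origin are all preserved, so the bijection respects the weights $a$, $b$, $c$. Hence
\[
Q_{0,0}^{\mathrm{Krew}}(t;a,b,c) = Q_{0,0}^{\mathrm{revKrew}}(t;a,b,c).
\]

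By \cref{1111}, $Q^{\mathrm{revKrew}}(x,y)$ satisfies a polynomial relation over $\mathbb{Q}(t,x,y,a,b,c)$; specializing this relation at $x=y=0$ shows that $Q_{0,0}^{\mathrm{revKrew}}$ is algebraic over $\mathbb{Q}(t,a,b,c)$. In fact the algebraic kernel method carried out in \cref{sec:revkrew} produces $Q_{0,0}^{\mathrm{revKrew}}$ as an explicit algebraic series directly, since it appears as one of the unknowns in the functional equation \eqref{1x1x}. Combined with the identity above, this yields algebraicity of $Q_{0,0}^{\mathrm{Krew}}$.

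There is no serious obstacle here; the corollary is a short consequence of the reversal symmetry and \cref{1111}. It is worth noting that this argument does not extend to $Q^{\mathrm{Krew}}(x,y)$ in general, since a Kreweras walk ending at $(k,l) \neq (0,0)$ reverses to a reverse Kreweras walk starting at $(k,l)$, which falls outside the framework considered here (walks are required to start at the origin). This is consistent with the stated fact that the full Kreweras generating function is only D-finite.
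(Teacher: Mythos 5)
Your proof is correct and is essentially the paper's own argument: both rely on the step-reversal bijection between Kreweras and reverse Kreweras excursions from the origin, which preserves length and the multiset of visited vertices (hence the weights $a,b,c$), and then invoke \cref{1111}. Your version just spells out the details (and the reason the argument fails for general endpoints) more explicitly than the paper does.
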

\begin{proof}
If we just reverse the direction of each step of a Kreweras walk starting from the origin and ending at the origin, we got a reverse Kreweras walk. So $Q(0,0)$ is the same for these two models. Thus $Q(0,0)$ for Kreweras walks is algebraic. See \cref{fig1}.
\end{proof}
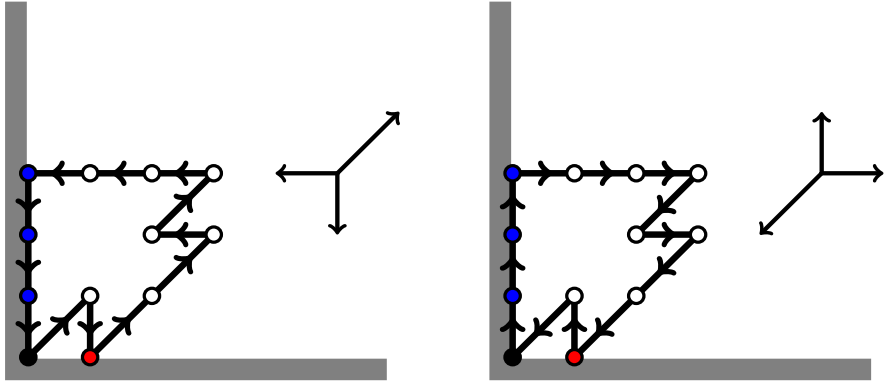
\begin{figure}
\centering
\resizebox{0.8\textwidth}{!}{
\begin{minipage}[t]{0.48\textwidth}
\centering
\begin{tikzpicture}
\tikzset{ac/.style={circle, line width=1.5pt, draw=black, fill=red, inner sep=2.5pt}}
\tikzset{bc/.style={circle, line width=1.5pt, draw=black, fill=blue, inner sep=2.5pt}}
\tikzset{cc/.style={circle, line width=1.5pt, draw=black, fill=black, inner sep=2.5pt}}
\tikzset{vert/.style={circle, line width=1.5pt, draw=black, fill=white, inner sep=2.5pt}}
\draw [gray, line width=10pt] (-0.2,5.8) -- (-0.2,-0.2) -- (5.8,-0.2);
\begin{scope}[line width=3pt, decoration={markings,mark=at position 0.65 with {\arrow{>}}}]
\draw [postaction=decorate] (0,0) node [cc] {} -- (1,1);
\draw [postaction=decorate] (1,1) node [vert] {} -- (1,0);
\draw [postaction=decorate] (1,0) node [ac] {} -- (2,1);
\draw [postaction=decorate] (2,1) node [vert] {} -- (3,2);
\draw [postaction=decorate] (3,2) node [vert] {} -- (2,2);
\draw [postaction=decorate] (2,2) node [vert] {} -- (3,3);
\draw [postaction=decorate] (3,3) node [vert] {} -- (2,3);
\draw [postaction=decorate] (2,3) node [vert] {} -- (1,3);
\draw [postaction=decorate] (1,3) node [vert] {} -- (0,3);
\draw [postaction=decorate] (0,3) node [bc] {} -- (0,2);
\draw [postaction=decorate] (0,2) node [bc] {} -- (0,1);
\draw [postaction=decorate] (0,1) node [bc] {} -- (0,0);
\end{scope}
\draw [line width=2pt, <->] (5,2) -- (5,3) -- (6,4);
\draw [line width=2pt, ->] (5,3) -- (4,3);
\end{tikzpicture}
\label{fig:paths}
\end{minipage}
\begin{minipage}[t]{0.48\textwidth}
\centering
\begin{tikzpicture}
\tikzset{ac/.style={circle, line width=1.5pt, draw=black, fill=red, inner sep=2.5pt}}
\tikzset{bc/.style={circle, line width=1.5pt, draw=black, fill=blue, inner sep=2.5pt}}
\tikzset{cc/.style={circle, line width=1.5pt, draw=black, fill=black, inner sep=2.5pt}}
\tikzset{vert/.style={circle, line width=1.5pt, draw=black, fill=white, inner sep=2.5pt}}
\draw [gray, line width=10pt] (-0.2,5.8) -- (-0.2,-0.2) -- (5.8,-0.2);
\begin{scope}[line width=3pt, decoration={markings,mark=at position 0.65 with {\arrow{>}}}]
\draw [postaction=decorate] (0,0) node [cc] {} -- (0,1);
\draw [postaction=decorate] (0,1) node [bc] {} -- (0,2);
\draw [postaction=decorate] (0,2) node [bc] {} -- (0,3);
\draw [postaction=decorate] (0,3) node [bc] {} -- (1,3);
\draw [postaction=decorate] (1,3) node [vert] {} -- (2,3);
\draw [postaction=decorate] (2,3) node [vert] {} -- (3,3);
\draw [postaction=decorate] (3,3) node [vert] {} -- (2,2);
\draw [postaction=decorate] (2,2) node [vert] {} -- (3,2);
\draw [postaction=decorate] (3,2) node [vert] {} -- (2,1);
\draw [postaction=decorate] (2,1) node [vert] {} -- (1,0);
\draw [postaction=decorate] (1,0) node [ac] {} -- (1,1);
\draw [postaction=decorate] (1,1) node [vert] {} -- (0,0);
\end{scope}
\draw [line width=2pt, <->] (4,2) -- (5,3) -- (5,4);
\draw [line width=2pt, <-] (6,3) -- (5,3);
\end{tikzpicture}
\end{minipage}
}
\caption{Examples of reverse Kreweras and Kreweras walks ending at the origin.}\label{fig1}
\end{figure}
This property does not hold for general $Q_{i,j}$. However, it will play an important role when solving Kreweras walks.

For Kreweras walks, the allowed steps are $\{\mathrm{NE,S,W}\}$. So we have
\begin{equation}
S(x,y) = xy + \olx + \oly
\end{equation}
and hence
\begin{equation}
A(x,y) = \oly, \qquad B(x,y) = \olx, \qquad G(x,y) = 0.
\end{equation}
The functional equation reads
\begin{multline}
\left(1-t(xy+\olx+\oly)\right)Q(x,y)=\frac1c + \frac1a(a-1 - ta\oly)Q(x,0) + \frac1b(b - 1 - tb\olx)Q(0,y) \\
+\frac{1}{abc}(ac+bc-ab-abc)Q(0,0).\label{eqn:krew_func_eqn}
\end{multline}

Our main result regarding Kreweras walks is the following.
\begin{theorem}\label{thm:krew_dfinite}
For arbitrary $a,b,c$, the generation function $Q(x,y)$ of Kreweras walks is D-finite.
\end{theorem}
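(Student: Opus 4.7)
The plan is to apply the algebraic kernel method to the functional equation~\eqref{eqn:krew_func_eqn}. The Kreweras step set admits a group of order six generated by the involutions $\Phi:(x,y)\mapsto(\olx\oly,y)$ and $\Psi:(x,y)\mapsto(x,\olx\oly)$, both of which fix $S(x,y)$ and hence the kernel $K(x,y)$. I would write~\eqref{eqn:krew_func_eqn} in the form $K(x,y)Q(x,y)=R(x,y)$, where $R$ is an explicit linear combination (with coefficients in the boundary weights and in $t,x,y$) of $1$, $Q(x,0)$, $Q(0,y)$ and $Q(0,0)$, and apply each of the six group elements in turn, obtaining six equations with the same left-hand side.

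Following the strategy flagged in the introduction, I would then combine these six equations into a full orbit sum and a half orbit sum, in the same spirit as the treatment of reverse Kreweras in \cref{sec:revkrew}. The crucial simplification here is provided by \cref{cor:krew00}: the corner series $Q(0,0)$ is already known to be algebraic, so substituting this known quantity into the orbit-sum identities leaves only the two univariate boundary series $Q(x,0)$ and $Q(0,y)$ as genuine unknowns. After solving the resulting linear system, I expect $Q(x,0)$ and $Q(0,y)$ to be expressible as suitable coefficient extractions (positive parts in $x$ or $y$, or a diagonal extraction) applied to an explicit bivariate expression that is algebraic in $x$ and $y$ over the field of rational functions in $t,a,b,c$.

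To finish, I would invoke the standard closure result that positive-part and diagonal extractions send bivariate algebraic series to D-finite univariate series. This yields D-finiteness of $Q(x,0)$ and $Q(0,y)$. Rearranging~\eqref{eqn:krew_func_eqn} to $Q(x,y)=R(x,y)/K(x,y)$ then expresses $Q(x,y)$ as a rational combination, in $x$ and $y$, of the algebraic series $Q(0,0)$ and the D-finite series $Q(x,0)$ and $Q(0,y)$; closure of D-finite series under such rational combinations completes the argument.

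The main obstacle I anticipate is the orbit-sum bookkeeping. Applying $\Phi$ or $\Psi$ introduces negative powers of $x$ and $y$ in the boundary terms, so the orbit sum produces an expression supported on all four sectors of $\mathbb{Z}^2$, and cleanly isolating the parts that correspond respectively to $Q(x,0)$, $Q(0,y)$, $Q(0,0)$ and $Q(x,y)$ -- while carrying along the algebraic minimal polynomial satisfied by $Q(0,0)$ -- will be delicate, especially because the interaction weights $a,b,c$ break the symmetries that made the half-orbit sum alone sufficient in the uniform case. A secondary subtlety is that, for generic $(a,b,c)$, the resulting extraction is not expected to collapse to an algebraic series, which is precisely what leaves open the (conjectural) non-algebraicity alluded to just before the theorem.
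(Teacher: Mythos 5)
Your proposal follows essentially the same route as the paper: full- and half-orbit sums over the order-six group, substitution of the algebraic $Q(0,0)$ supplied by \cref{cor:krew00}, recovery of the boundary series by positive/negative-part extraction, and D-finiteness via closure under positive-part extraction of algebraic functions and rational combinations. Note only that the linear system actually solved is in the finitely many point terms $Q_{1,0},\dots,Q_{4,0}$ (which appear after the canonical factorisation of $\Delta$ and cancellation of the kernel factors at their power-series roots), and that importing $Q(0,0)$ is not merely a convenience but a necessity, since without it the resulting $5\times 5$ system is singular.
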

We will prove this theorem in \cref{sec:krew}. The general idea is the same as for reverse Kreweras walks. We will make use of \cref{cor:krew00} in this proof.

\section{Reverse Kreweras walks}\label{sec:revkrew}

In our situation, the basic idea of the algebraic kernel method will be to find a linear equation in $Q(x,0)$, $Q^d_0(\olx)$ and $Q_{i,j}$ where the positive and negative powers of $x$ can be separated. One can review the walk-through of algebraic kernel method by solving the Kreweras or reverse Kreweras walks with $a=b=c=1$~\cite{bousquet2005walks,bousquet2010walks,mishna2009classifying}. For arbitrary $a,b,c$, we still follow the same ideas but the process is more involved.

The whole process of solving reverse Kreweras walks consists of three steps. 
\begin{itemize}
\item First, we recall the symmetry group of the kernel and use it to take the full-orbit sum. We extract the $[x^{>}y^0]$ and $[x^{<}y^0]$ parts of the full-orbit sum to obtain two equations (\cref{lem:revkrew_fos_pos_and_neg}).
\item Second, we take a half-orbit sum and again take the $[y^0]$ terms. We use the equations obtained from the full-orbit sum to eliminate certain boundary terms, and then once again take the positive and negative parts with respect to $x$. This yields two equations which have kernel-like form (\cref{lem:revkrew_two_kernel_equations}).
\item  Finally, we cancel the kernels of these two equations (see \cref{lem:revkrew_twokernels_roots}) and find a set of linear equations. These equations will provide us the final solution.
\end{itemize}

This process should be contrasted with the solution to reverse Kreweras walks without boundary weights~\cite{bousquet2010walks}. There, the full-orbit sum yields no useful information, and instead the half-orbit sum alone is sufficient to solve the model.

\subsection{The symmetry group}

The \emph{symmetry group} of a lattice path model is the set of birational transformations of $(x,y)$ which leave the kernel $K(x,y)$ unchanged~\cite{bousquet2010walks}. This group is generated by the pair
\begin{equation}
\phi :(x,y)\mapsto\left(\frac{B_{-1}(y)}{xB_1(y)},y\right) \  \ \text{and}\  \ \psi:(x,y)\mapsto\left(x,\frac{A_{-1}(x)}{yA_1(x)}\right).
\end{equation}
For reverse Kreweras walks, these are
\begin{equation}
\phi :(x,y)\mapsto(\olxy,y) \  \ \text{and}\  \ \psi:(x,y)\mapsto(x,\olxy).
\end{equation}
The resulting group is isomorphic to $D_3$:
\begin{equation}
(x,y),\ \ (\olxy,y),\ \ (y,\olxy),\ \ (y,x),\ \ (\olxy,x),\ \ (x,\olxy).\label{symmetries}
\end{equation}

\subsection{Full-orbit sum}

Applying all these symmetries to the functional equation~\eqref{1x1x} yields six equations. For simplicity, we write these equations in a matrix form:
\begin{equation}
 K(x,y)\mathbf{Q}= \mathbf{MV}+ \mathbf{C}\label{K(x,y)Q=MV+C}.
\end{equation}
Here $\mathbf{Q}$ is the column vector of all transformed $Q(x,y)$ and $\mathbf{V}$ is the transformed line boundary terms
\begin{equation}\label{eqn:rev_Q_vec}
\mathbf{Q}=\begin{pmatrix}
Q(x,y) \\
Q(\olxy,y)\\
Q(y,\olxy)\\
Q(y,x)\\
Q(\olxy,x)\\
Q(x,\olxy)
\end{pmatrix}, \hspace{1cm}
\mathbf{V}=\begin{pmatrix}
Q(x,0) \\
Q(0,y)\\
Q(\olxy,0)\\
Q(0,\olxy)\\
Q(0,x)\\
Q(y,0)
\end{pmatrix},
\end{equation}
and $\mathbf{M}$ is the coefficient matrix
\begin{equation}\label{eqn:rev_M_matrix}
\mathbf{M}=\begin{pmatrix}
    A'(x,y) & B'(x,y)  &0  & 0&0&0\\   
    0 &  B'(\olxy,y)& A'(\olxy,y)&0&0&0\\   
    0 & 0 & 0 & B'(y,\olxy) & 0 & A'(y,\olxy)\\
	0 & 0 & 0 & 0 & B'(y,x) & A'(y,x)\\
    0 & 0 & A'(\olxy,x) & 0 & B'(\olxy,x) & 0\\
    A'(x,\olxy) & 0 & 0 & B'(x,\olxy) & 0 & 0
\end{pmatrix}.
\end{equation}
Here $A'(x,y) = \frac1a(a-1-ta\olxy)$ and $B'(x,y) = \frac1b(b-1-tb\olxy)$. The column vector $\mathbf{C}$ contains the point boundary terms $Q(0,0)$ and some known terms in $\mathbb{R}((x,y,t))$:
\begin{equation}
\mathbf{C}=\frac{1}{abc}\begin{pmatrix}
{\left[-ab+ac+bc-abc+tabc\olxy\right]Q(0,0) +a b} \\
{\left[-ab+ac+bc-abc+tabcx\right]Q(0,0) +a b}\\
{\left[-ab+ac+bc-abc+tabcx\right]Q(0,0) +a b}\\
{\left[-ab+ac+bc-abc+tabc\olxy\right]Q(0,0) +a b}\\
{\left[-ab+ac+bc-abc+tabcy\right]Q(0,0) +a b}\\
{\left[-ab+ac+bc-abc+tabcy\right]Q(0,0) +a b}
\end{pmatrix}.
\end{equation}
It is straightforward to show that the determinant of $\mathbf{M}$ equals zero. Thus, there exists a linear combination of these six equations that cancels all variables in $\mathbf{V}$. We choose a vector $\mathbf{N}$ that spans the nullspace:
\begin{equation}
\mathbf{N}=
\begin{pmatrix}
-y(1-b+tbx)(1-a+tay) \\
\olx(1-a+tay)(tb+xy-bxy) \\
-\olx(1-b+tby)(ta+xy-axy) \\
y(1-a+tax)(1-b+tby) \\
-\olx(1-a+tax)(tb+xy-bxy) \\
-\olx(1-b+tbx)(ta+xy-axy)
\end{pmatrix}^\mathsf{T}
\end{equation}
Multiplying $\mathbf{N}$ to both sides of~\eqref{K(x,y)Q=MV+C}, we have
\begin{equation}
K(x,y)\mathbf{N} \mathbf{Q}=\mathbf{N MV}+\mathbf{N C}=\mathbf{N C}\label{Ab}
\end{equation}
since $\mathbf{N M}=\mathbf{0}$. The equation~\eqref{Ab} is called the \emph{full-orbit sum}.

We also have $\mathbf{NC} = \mathbf{0}$ in~\eqref{Ab}. This happens in all non-interacting algebraic models~\cite{bousquet2010walks}, including Kreweras and reverse Kreweras walks. In fact it only happens for algebraic models since for all walks with transcendental generating functions, $\mathbf{N C}\neq \mathbf{0}$~\cite{mishna2009classifying}. Note that the full-orbit sum is redundant in the unweighted case because $Q(x,y)=Q(y,x)$; the lack of such symmetry here means that we can extract useful information from~\eqref{Ab}.

Next, divide $K(x,y)$ on both sides and then extract the $[y^0]$ term of~\eqref{Ab}. Some new boundary terms will appear when performing the extraction.

By geometric properties, we can eliminate some of them and find an equation of the form
\begin{equation}\label{simplified orbit sum}
[y^0]\mathbf{NQ} = 0 = \alpha + \alpha_{0,0}Q(0,0) + \alpha_{0,x}Q(0,x) + \alpha_{x,0}Q(x,0) + \alpha^\mathrm{d}_0 Q^\mathrm{d}_0(\olx) + \alpha^\mathrm{d}_1 Q^\mathrm{d}_1(\olx)
\end{equation}
where the $\alpha$ coefficients are Laurent polynomials in $x$ and $t$.

For an equation with this form, the terms with positive and negative powers of $x$ can be naturally separated. We then take the $[x^>]$ and $[x^<]$ parts. After some simplifications by boundary conditions, we obtain the following.

\begin{lemma}\label{lem:revkrew_fos_pos_and_neg}
The $[x^>]$ and $[x^<]$ parts of \eqref{simplified orbit sum} can be written as
\begin{equation}\label{(x,0) and (0,x)}
[x^>y^0]\mathbf{NQ} = 0 = \beta + \beta_{0,0}Q(0,0) + \beta_{0,1}Q_{0,1} + \beta_{1,0}Q_{1,0} + \beta_{x,0}Q(x,0) + \beta_{0,x}Q(0,x)
\end{equation}
and
\begin{equation}\label{1/x}
[x^<y^0]\mathbf{NQ} = 0 = \gamma + \gamma_{0,0}Q(0,0) + \gamma_{0,1}Q_{0,1} + \gamma_{1,0}Q_{1,0} + \gamma^\mathrm{d}_0 Q^\mathrm{d}_0(\olx) + \gamma^\mathrm{d}_1 Q^\mathrm{d}_1(\olx),
\end{equation}
where the $\beta$ and $\gamma$ coefficients are Laurent polynomials in $x$ and $t$.
\end{lemma}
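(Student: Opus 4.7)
The plan is to treat the simplified orbit sum as an identity in the ring of formal Laurent series in $x$ (with coefficients that are formal power series in $t$, rational in $a,b,c$), and to extract its positive- and negative-power parts in $x$ separately, each of which must independently vanish.

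First, I would classify each summand by its $x$-support: $Q(0,0)$ is constant in $x$; $Q(x,0)$ and $Q(0,x)$ are formal power series with only non-negative powers of $x$; $Q^{\mathrm{d}}_{0}(\olx)$ and $Q^{\mathrm{d}}_{1}(\olx)$ have only non-positive powers of $x$; and each coefficient $\alpha_{\bullet}$ is a Laurent polynomial in $x$. Under this classification, applying $[x^{>}]$ to the two ``positive-supported'' products $\alpha_{x,0}Q(x,0)$ and $\alpha_{0,x}Q(0,x)$ returns the full product minus a $[x^{\leq 0}]$ residue, and that residue is a finite linear combination of leading coefficients $Q_{i,0}$ or $Q_{0,i}$ whose length is controlled by the most-negative $x$-power of the respective $\alpha$. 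Applying $[x^{>}]$ to the two ``negative-supported'' products $\alpha^{\mathrm{d}}_{j}Q^{\mathrm{d}}_{j}(\olx)$ yields only a polynomial in $x$, involving finitely many low-index coefficients $Q^{\mathrm{d}}_{j,i}$, which by definition are point generating functions $Q_{i,i+j}$.

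Next, explicit inspection of the $\alpha$ coefficients -- obtained by reading off $[y^{0}]$ of $\mathbf{N}\mathbf{Q}$ and applying the boundary simplifications that produce the simplified orbit sum -- shows that (i) the smallest $x$-power in each of $\alpha_{x,0}$ and $\alpha_{0,x}$ is $x^{-1}$, so the $[x^{\leq 0}]$ residues involve only $\{Q_{0,0},Q_{1,0}\}$ and $\{Q_{0,0},Q_{0,1}\}$ respectively; and (ii) the largest $x$-power in each of $\alpha^{\mathrm{d}}_{0}$ and $\alpha^{\mathrm{d}}_{1}$ is $x$, so only $Q^{\mathrm{d}}_{0,0}=Q_{0,0}$ and $Q^{\mathrm{d}}_{1,0}=Q_{0,1}$ survive in the polynomial contributions. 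Collecting these and absorbing them into new coefficients $\beta_{\bullet}$ gives exactly the first claimed equation. The $[x^{<}]$ analysis is symmetric: now $[x^{<}](\alpha_{x,0}Q(x,0))$ and $[x^{<}](\alpha_{0,x}Q(0,x))$ are the finite Laurent polynomials in $\{Q_{0,0},Q_{1,0},Q_{0,1}\}$, while $[x^{<}](\alpha^{\mathrm{d}}_{j}Q^{\mathrm{d}}_{j}(\olx))$ equals $\alpha^{\mathrm{d}}_{j}Q^{\mathrm{d}}_{j}(\olx)$ minus a polynomial residue of the same bounded type, yielding the second equation.

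The main technical obstacle is establishing the required bounds on the smallest and largest $x$-powers of the $\alpha$ coefficients: these do not follow from abstract structural considerations alone, but must be read off from the explicit form of $\mathbf{N}$ and the substituted entries of $\mathbf{Q}$ after the boundary simplifications are carried out. Since the $\alpha$ coefficients are lengthy Laurent polynomials in $x$ with coefficients rational in $t,a,b,c$, this verification is most safely performed in computer algebra, and is confirmed in the accompanying \textsc{Mathematica} notebook referenced in the introduction.
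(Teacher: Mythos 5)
Your proposal is correct and takes essentially the same approach as the paper, which gives no in-text proof of this lemma beyond ``we take the $[x^>]$ and $[x^<]$ parts'' and defers the explicit verification to its accompanying {\sc Mathematica} notebooks. Your support-classification argument, with the residues controlled by the extremal $x$-degrees of the $\alpha$ coefficients (to be checked computationally), supplies exactly the structural justification the paper leaves implicit.
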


When $a=b$,~\eqref{(x,0) and (0,x)} and~\eqref{1/x} give $Q(x,0) = Q(0, x)$ and $0 = 0$. This step is omitted when solving the non-interacting or equally-interacting cases~\cite{beaton_exact_2019,bousquet2005walks} since the geometric symmetry is obvious.
 
\subsection{Half-orbit sum}

We have now obtained some geometric symmetries of reverse Kreweras walks from the full-orbit sum. However, we lost the information contained in the kernel, since the RHS of the full-orbit sum was $0$. We will now ``regain'' this information by taking a half-orbit sum.

Recall the six equations obtained by applying the symmetry group. We rewrite the matrix equation~\eqref{K(x,y)Q=MV+C} slightly differently:
\begin{equation}
K(x,y)\mathbf{Q}=\mathbf{M}_2 \mathbf{V}_2+\mathbf{C}_2\label{half}
\end{equation}
Unlike the full-orbit sum, $Q(x,0)$ and $Q(0,x)$ are not included in the vector $\mathbf{V}_2$. Instead, $\mathbf{V}_2$ is
\begin{equation}
V_2=\begin{pmatrix}
Q(0,y)\\
Q(\olxy,0)\\
Q(0,\olxy)\\
Q(y,0)\label{rkh}
\end{pmatrix}
\end{equation}
The terms $Q(x,0)$ and $Q(0,x)$ are treated as constant and we put them in $\mathbf{C}_2$. $\mathbf{M}_2$ contains the corresponding coefficients of $\mathbf{V}_2$ in $\mathbf{M}$. So $\mathbf{M}_2$ is a $6\times 4$ rectangular matrix. The dimension of the null-space is $2$. Two orthogonal basis vectors span $\mathbf{M_2}^\mathsf{T}$. We may choose either of them in the following derivation. The vector we choose is 

\begin{equation}
\mathbf{N}_2 = \begin{pmatrix}
-x(1-b+tbx)(1-a+tay) \\
\oly(1-a+tay)(tb+xy-bxy) \\
0\\
0\\
-\oly(1-a+tax)(tb+xy-bxy)\\
0
\end{pmatrix}^\mathsf{T}.
\end{equation}
Notice that $\mathbf{N}_2$ has three zeros and three non-zeros, corresponding to a half-orbit sum.

Then, multiply $\mathbf{N}_2$ on~\eqref{half} and divide by $K(x,y)$. We find 
\begin{equation}
\mathbf{N}_2\mathbf{Q}=\frac{\mathbf{N}_2\mathbf{C_2}}{K(x,y)},\label{hos}
\end{equation}
since $\mathbf{N}_2\mathbf{M}_2=0$. The equation~\eqref{hos} is called the \emph{half-orbit sum}.

As we did for the full-orbit sum case, we extract the $[y^0]$ of~\eqref{hos} to get an equation without the variable $y$. We can again use boundary conditions to simplify this equation, eventually finding an equation of the form
\begin{equation}\label{rev_halfOS_lhs_y0}
[y^0]\mathbf{N_2Q} = \delta + \delta_{0,0}Q(0,0) + \delta_{x,0}Q(x,0) + \delta_{0,x}Q(0,x) + \delta^\mathrm{d}_0 Q^\mathrm{d}_0(\olx) + \delta^\mathrm{d}_1 Q^\mathrm{d}_1(\olx)
\end{equation}
where the $\delta$ coefficients are polynomials in $x$ and $t$.

Now, consider the RHS.
The RHS contains two parts, $\mathbf{N}_2\mathbf{C}_2$ and $1/K(x,y)$. First consider $\mathbf{N}_2\mathbf{C}_2$. It is a linear equation of $Q(x,0)$, $Q(0,x)$ and $Q(0,0)$ with coefficients in $\mathbb{R}^r(x,t)((y))$. It can be treated as a function of $y$ whose coefficients are linear terms of $Q(x,0)$, $Q(0,x)$, $Q(0,0)$ and $\mathbb{R}^r(x,t)$, and by examination we find  $\mathbf{N}_2\mathbf{C}_2$ only contains $y^{-1},y^0,y^1$ terms. So it can be written as 
\begin{equation}
\mathbf{N}_2\mathbf{C}_2 = u_{-1} y^{-1} + u_{0} y^{0} +  u_{1}y^{1}.
\end{equation}

Then consider $\frac{1}{K(x,y)}$. It can be expanded as a power series in $t$ with coefficients that are polynomials in $x,\olx,y,\oly$. By collecting all terms with a given power of $y$, we can define 
$K_i = [y^i]\frac{1}{K(x,y)}$.

It follows that the $[y^0]$ term of~\eqref{hos} is 
\begin{align}
[y^0]\left(\frac{\mathbf{N}_2\mathbf{C}_2}{K(x,y)}\right) &= u_{-1} K_1+ u_{0} K_0 + u_{1} K_{-1} \\
 &= \epsilon + \epsilon_{0,0} Q(0,0) + \epsilon_{x,0}Q(x,0) + \epsilon_{0,x}Q(0,x)\label{rev_halfOS_rhs_y0}
\end{align}
where the $\epsilon$ coefficients are Laurent polynomials in $x,t$ and $K_1,K_0,K_{-1}$.

In the next section, we demonstrate how to compute $K_i$.

\subsection{The roots of the kernel}

The following lemma will be useful.

\begin{lemma}[Lemma 7 in~\cite{bousquet2010walks}]\label{l1}
For a quadratic equation $K(x,y)=1-t S(x,y)$, where $S(x,y)$ is defined as per~\eqref{S}, we have:
$1/K(x,y)$ is a formal power series of $t$ with polynomial coefficients in $x,\olx,y,\oly$, and
\begin{equation}
\frac{1}{K(x,y)}=\frac{1}{\sqrt{\Delta(x)}}\left(\frac{1}{1-\oly Y_0(x)}+\frac{1}{1-y/Y_1(x)}-1\right)\label{canonical}
\end{equation}
where
\begin{equation}
\Delta(x)=(1-tA_0(x))^2-4t^2A_{-1}(x)A_1(x)
\end{equation}
and
\begin{equation}
Y_0=\frac{1-tA_0(x)-\sqrt{\Delta(x)}}{2tA_1(x)}, \    \ Y_1=\frac{1-tA_0(x)+\sqrt{\Delta(x)}}{2tA_1(x)}.
\end{equation}
Thus
\begin{equation}
[y^j]\frac{1}{K(x,y)}= K_j = \begin{cases}
\frac{Y_0(x)^{-j}}{\sqrt\Delta(x)} & \text{if } j\leq 0\\
\frac{Y_1(x)^{-j}}{\sqrt\Delta(x)} & \text{if } j\geq 0\label{ccf}.
\end{cases}
\end{equation}
\end{lemma}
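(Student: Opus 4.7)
The plan is to treat $K(x,y)$ as a quadratic polynomial in $y$ (after clearing a factor of $y$), factor it in terms of its two roots $Y_0(x)$ and $Y_1(x)$, and then decompose $1/K$ by partial fractions into two pieces each of which admits a clean geometric expansion in the appropriate ``direction'' in $y$.

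First I would dispense with the existence claim: $1/K(x,y) = \sum_{n \geq 0} t^n S(x,y)^n$, and since each $S(x,y)^n$ is a Laurent polynomial in $x$ and $y$, this sum lies in $\mathbb{R}[x,\olx,y,\oly][[t]]$. Next I would identify the roots. Multiplying through by $y$, $y K(x,y) = -tA_1(x)y^2 + (1-tA_0(x))y - tA_{-1}(x)$ is a genuine quadratic in $y$, and the quadratic formula returns exactly $Y_0,Y_1$ with $\Delta(x)$ as the (rescaled) discriminant. A short computation using $\sqrt{\Delta(x)} = 1 - tA_0(x) + O(t^2)$ (principal branch) then shows $Y_0 \in t\cdot\mathbb{R}[x,\olx][[t]]$ and $1/Y_1 \in t\cdot\mathbb{R}[x,\olx][[t]]$. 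These valuation estimates are exactly what will make the upcoming geometric expansions well-defined as $t$-power series.

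For the decomposition itself, $yK(x,y) = -tA_1(x)(y-Y_0)(y-Y_1)$ together with $Y_1-Y_0 = \sqrt{\Delta(x)}/(tA_1(x))$ lets me partial-fraction $y/((y-Y_0)(y-Y_1))$ and simplify to
\begin{equation*}
\frac{1}{K(x,y)} = \frac{1}{\sqrt{\Delta(x)}}\left(\frac{Y_0\oly}{1-Y_0\oly} + \frac{1}{1-y/Y_1}\right),
\end{equation*}
which is exactly~\eqref{canonical} after the identity $Y_0\oly/(1-Y_0\oly) = 1/(1-Y_0\oly) - 1$. The coefficient formula~\eqref{ccf} then follows by expanding both geometric series: the first is a power series in $Y_0\oly$ and contributes only non-positive powers of $y$, giving $[y^{-k}] = Y_0^k/\sqrt{\Delta(x)}$ for $k \geq 0$; the second is a power series in $y/Y_1$ and contributes only non-negative powers, giving $[y^k] = Y_1^{-k}/\sqrt{\Delta(x)}$ for $k \geq 0$. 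At $k=0$ the two formulae agree, returning $1/\sqrt{\Delta(x)}$.

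The argument is essentially bookkeeping, and the one point requiring real care is reconciling these two geometric expansions with the unique $t$-series for $1/K$ (rather than landing on the other branch of the square root). This reduces exactly to the $t$-valuation estimates on $Y_0$ and $1/Y_1$ established above, together with the fact that $1/\sqrt{\Delta(x)}$ is itself a power series in $t$ since $\Delta(x)|_{t=0}=1$.
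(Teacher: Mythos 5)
Your proposal is correct and follows essentially the same route as the paper's proof: factor $yK(x,y)$ over its roots $Y_0,Y_1$ in $y$, rewrite as a product of $(1-\oly Y_0)$ and $(1-y/Y_1)$, partial-fraction into the canonical form, and justify the two geometric expansions via the $t$-valuations of $Y_0$ and $1/Y_1$. Your write-up is in fact slightly more careful than the paper's (which states the factorisation and valuations and simply says ``rearranging yields the result''), but there is no substantive difference in approach.
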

\begin{proof}
Factoring $K(x,y)$, we have
\begin{equation}
K(x,y)= t A_1(x)(y-Y_0)(y-Y_1)=\frac{tA_1(x)}{Y_1y}(1-\oly Y_0)(1-y/Y_1)\label{fa}
\end{equation}
Rearranging yields~\eqref{canonical}. Since $Y_0(x)$ has valuation $1$ in $t$ and $Y_1(x)$ has valuation $-1$ in $t$, the RHS of~\eqref{ccf} is a formal power series in $t$.
\end{proof}

Now equating~\eqref{rev_halfOS_lhs_y0} and~\eqref{rev_halfOS_rhs_y0}, using~\eqref{(x,0) and (0,x)} and~\eqref{1/x} to eliminate $Q(0,x)$ and $Q^\mathrm{d}_1(\olx)$,  and substituting the explicit expressions for $K_i$, gives 
\begin{multline}\label{eqn:rk_halforbitsum_y0}
\mu_{x,0}Q(x,0) + \nu^\mathrm{d}_0\sqrt{\Delta}Q^\mathrm{d}_0(\olx) = (\mu+\nu\sqrt{\Delta}) + (\mu_{0,0}+\nu_{0,0}\sqrt{\Delta})Q(0,0) \\ + (\mu_{0,1}+\nu_{0,1}\sqrt{\Delta})Q_{0,1} + (\mu_{1,0}+\nu_{1,0}\sqrt{\Delta})Q_{1,0},
\end{multline}
where the $\mu$ and $\nu$ are all \emph{polynomials}, and
\begin{equation}
\sqrt{\Delta}=\sqrt{t^2\olx \left(x^3-4\right)-2 t x+1}.
\end{equation}

Unfortunately~\eqref{eqn:rk_halforbitsum_y0} still cannot be separated directly, as $\sqrt\Delta$ is a Laurent series of $x$ and $Q^d_0(\olx)$ is a series of $\olx$ with unknown coefficients. Then the $[x^>]$ of $\sqrt{\Delta(x)}Q^d_0(\olx)$ involves an infinite number of unknowns. We need to introduce the canonical factorisation of $\Delta$, which provides a way to deal with $\sqrt{\Delta}$.

\subsection{The canonical factorisation}

\begin{lemma}[The canonical factorisation of $\Delta$~\cite{bousquet2010walks}]\label{lem:canonical_factorisation}
We have
\begin{equation}
\Delta(x)=(1-tA_0(x))^2-4t^2A_{-1}(x)A_1(x)
\end{equation}
If $\Delta(x)$ has valuation $-\delta$, and degree $d$ in x, then $\Delta(x)=0$ has $\delta+d$ roots. Exactly $\delta$ of them (say $X_1,\dots,X_\delta$) are finite (actually vanish) when $t=0$ and the remaining $d$ roots ($X_{\delta+1},\dots,X_{\delta+d}$) have negative valuation in $t$ and thus diverge when $t=0$. 

Then $\Delta$ can be factored as:
\begin{equation}
\Delta(x)=\Delta_0\Delta_{-}(\olx)\Delta_{+}(x)\label{fac}
\end{equation}
with 
\begin{align}
\Delta_{-} &\equiv \Delta_{-}(\olx,t) = \Pi_{i=1}^{\delta}(1-X_i/x) \label{d1}\\
\Delta_{+} &\equiv \Delta_{+}(\olx,t) = \Pi_{i=1+\delta}^{\delta+d}(1-x/X_i) \label{d2}\\
\Delta_{0} &\equiv \Delta_{0}(t) = (-1)^{\delta}\frac{[\olx^{\delta}]\Delta(x)}{\Pi_{i=1}^{\delta}X_i}=(-1)^d[x^d]\Delta(x)\Pi_{i=\delta+1}^{\delta+d}X_i. \label{d3}
\end{align}
\end{lemma}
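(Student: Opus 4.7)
My plan is to prove the factorisation by clearing $x$-denominators in $\Delta$, analysing the resulting polynomial's roots via Newton--Puiseux expansions in $t$, and then reassembling the product formula into the three pieces $\Delta_{0}$, $\Delta_{-}$ and $\Delta_{+}$.

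First I would set $P(x,t) = x^{\delta}\Delta(x,t)$. By the definitions of the valuation $-\delta$ and degree $d$, this is an ordinary polynomial in $x$ of degree $\delta+d$ with coefficients in $\mathbb{R}[t]$, with nonzero constant term $[x^{0}]P = [\olx^{\delta}]\Delta$ and nonzero leading coefficient $[x^{\delta+d}]P = [x^{d}]\Delta$. The key observation is that at $t=0$ the expression $\Delta = (1-tA_{0})^{2} - 4t^{2}A_{-1}A_{1}$ specialises to $1$, so $P(x,0) = x^{\delta}$, which has $x = 0$ as a root of multiplicity $\delta$ and no other roots.

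Next, by the Newton--Puiseux theorem applied in the algebraic closure of $\bigcup_{n}\mathbb{C}((t^{1/n}))$, the $\delta+d$ roots of $P$ (counted with multiplicity) are Puiseux series in $t$. Continuity at $t = 0$ forces exactly $\delta$ of them, call them $X_{1},\dots,X_{\delta}$, to have strictly positive fractional valuation in $t$, namely those that converge to the unique root $x = 0$ of $P(\cdot, 0)$. The remaining $d$ roots $X_{\delta+1},\dots,X_{\delta+d}$ must then have strictly \emph{negative} fractional $t$-valuation, because the leading coefficient $[x^{d}]\Delta$ has strictly positive $t$-valuation and hence the monic polynomial $P/[x^{d}]\Delta$ has coefficients blowing up as $t \to 0$. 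Verifying the valuation partition rigorously, for instance via the Newton polygon of $P$ with respect to $t$ and confirming that no root has $t$-valuation exactly zero, is the step I expect to require the most care.

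Finally I assemble the factorisation. Writing $P(x) = [x^{d}]\Delta \cdot \prod_{i=1}^{\delta+d}(x - X_{i})$, the small roots contribute $\prod_{i=1}^{\delta}(x - X_{i}) = x^{\delta}\prod_{i=1}^{\delta}(1 - X_{i}/x) = x^{\delta}\Delta_{-}(\olx)$ after pulling an $x$ out of each factor, while the large roots contribute $\prod_{i=\delta+1}^{\delta+d}(x - X_{i}) = (-1)^{d}\bigl(\prod_{i=\delta+1}^{\delta+d}X_{i}\bigr)\Delta_{+}(x)$ after pulling $-X_{i}$ out of each factor. Substituting back into $\Delta(x) = x^{-\delta}P(x)$ gives~\eqref{fac} with $\Delta_{0} = (-1)^{d}[x^{d}]\Delta \cdot \prod_{i=\delta+1}^{\delta+d}X_{i}$, which is the second formula in~\eqref{d3}. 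The equivalent first formula then follows by evaluating $[x^{0}]P = [x^{d}]\Delta\cdot\prod_{i=1}^{\delta+d}(-X_{i})$ and solving for $\prod_{i=\delta+1}^{\delta+d}X_{i}$ in terms of $[\olx^{\delta}]\Delta$ and $\prod_{i=1}^{\delta}X_{i}$, which introduces the sign $(-1)^{\delta}$ after the factor of $(-1)^{d}$ from the large-root step combines with a $(-1)^{\delta+d}$ from the total product of roots.
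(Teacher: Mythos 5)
Your proposal is correct and follows essentially the same route as the paper's (much terser) proof: factor $\Delta$ over its Puiseux-series roots, group the $\delta$ roots vanishing at $t=0$ as $x(1-X_i\olx)$ and the $d$ divergent roots as $-X_i(1-x/X_i)$, and collect the constants into $\Delta_0$. Your additional care with the root count via $P(x,0)=x^{\delta}$ and the sign bookkeeping for the two expressions in~\eqref{d3} is a sound elaboration of what the paper leaves implicit.
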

\begin{proof}
We can factor $\Delta$ by its roots. For roots that are finite  at $t=0$, we factor them as $x(1-X_i\olx)$. For roots that diverge around $t=0$, we factor them as $X_i(1-x/X_i)$. This leads to~\eqref{fac}.
\end{proof}
We can check that $\Delta_{0},\Delta_{+},\Delta_{-}$ are formal power series in $t$ with constant term $1$. We will use this property in following derivation. This factorisation is a canonical factorisation -- see Gessel's work~\cite{gessel1980factorization} for more information.

In our case, $\Delta$ has one root which is a power series in $t$ and two which are not:
\begin{align}
X_1 &= 4 t^2 + 32 t^5 + 448 t^8 + \bigO(t^{11}) \\
X_2 &= t^{-1} + 2 t^{1/2} - 2 t^2 + 5 t^{7/2} - 16 t^5 + \textstyle\frac{231}{4} t^{13/2} - 
 224 t^8 + \frac{7293}{8} t^{19/2} + \bigO(t^{11}) \\
X_3 &= t^{-1} - 2 t^{1/2} - 2 t^2 - 5 t^{7/2} - 16 t^5 - \textstyle\frac{231}{4} t^{13/2} - 
 224 t^8 - \frac{7293}{8} t^{19/2} + \bigO(t^{11}) .
\end{align}
The canonical factorisation is then
\begin{align}
\Delta_0 &= t^2X_2 X_3\\
\Delta_{+} &= (1-x/X_2)(1-x/X_3)\\
\Delta_{-} &= 1-X_1\olx
\end{align}
so that
\begin{align}
\frac{1}{\sqrt{\Delta_+}} &= 1 + tx + t^2x^2  + t^3x^3  + t^4(6 x + x^4) + \bigO(t^5) \\
\sqrt{\Delta_0\Delta_-} &= 1 - 2t^2\olx - 4t^3 - 2t^4\olx^2 + \bigO(t^5)
\end{align}

\subsection{The algebraic solution of reverse Kreweras walks}\label{sec:alg_soln_rk}

We now take~\eqref{eqn:rk_halforbitsum_y0} and divide by $\sqrt{\Delta_+}$. Each $\mu$ term, divided by $\sqrt{\Delta_+}$, produces only non-negative powers of $x$. Each $\nu$ term, now multiplied by $\sqrt{\Delta_0\Delta_-}$ only, produces only finitely many non-negative powers of $x$. It is thus possible to compute the positive and negative parts with respect to $x$. Doing so, rearranging a bit and sending $x\mapsto\olx$ in the second equation gives the following.
\begin{lemma}\label{lem:revkrew_two_kernel_equations}
\begin{align}
\sigma_{x,0}P_{x,0}Q(x,0) &= \sigma + \sigma_{0,0}Q(0,0) + \sigma_{0,1}Q_{0,1} + \sigma_{1,0}Q_{1,0} \label{eqn:qx0_pos} \\
\tau^\mathrm{d}_0P^\mathrm{d}_0Q^\mathrm{d}_0(x) &= \tau + \tau_{0,0}Q(0,0) + \tau_{0,1}Q_{0,1} + \tau_{1,0}Q_{1,0},\label{eqn:qdx_neg}
\end{align}
where
\begin{align}
P_{x,0} &= (a^2 t^2 + x - a x - a t x^2 + a^2 t x^2) (2 a b t^2 + 2 x - a x - 
   b x - a t x^2 - b t x^2 + 2 a b t x^2) \\
P^\mathrm{d}_0 &= (-a t + a^2 t + x - a x + a^2 t^2 x^2) (-b t + b^2 t + x - b x + 
   b^2 t^2 x^2)
\end{align}
and the $\tau$ and $\sigma$ coefficients are algebraic functions of $t$ and $x$ (and $(a,b,c)$).
\end{lemma}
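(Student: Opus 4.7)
The plan is to separate equation~\eqref{eqn:rk_halforbitsum_y0} into its positive- and negative-in-$x$ components after first dividing through by $\sqrt{\Delta_+}$. Using the canonical factorisation $\sqrt{\Delta}=\sqrt{\Delta_+}\sqrt{\Delta_0\Delta_-}$ from \cref{lem:canonical_factorisation}, every remaining occurrence of $\sqrt{\Delta}$ becomes $\sqrt{\Delta_0\Delta_-}$, which is a formal power series in $t$ whose $x$-coefficients are polynomials in $\olx$ (with constant term $1$); meanwhile $1/\sqrt{\Delta_+}$ is a formal power series in $t$ whose $x$-coefficients are polynomials in $x$. Since the $\mu$ and $\nu$ coefficients are polynomials in $x$, and since $Q(x,0)$ and $Q^\mathrm{d}_0(\olx)$ are power series in $x$ and $\olx$ respectively, the following supports are forced: $(\mu_{x,0}/\sqrt{\Delta_+})\,Q(x,0)$ lives in non-negative powers of $x$; $\nu^\mathrm{d}_0\sqrt{\Delta_0\Delta_-}\,Q^\mathrm{d}_0(\olx)$ contains only finitely many non-negative powers of $x$ (bounded by $\deg_x\nu^\mathrm{d}_0$); and the RHS decomposes in the same style.

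Taking the $[x^{\geq 0}]$ part retains the $Q(x,0)$ piece in full, together with the finite positive-power ``leakage'' from $\nu^\mathrm{d}_0\sqrt{\Delta_0\Delta_-}\,Q^\mathrm{d}_0(\olx)$, which contributes at most a bounded list of point-boundary unknowns $Q_{i,i}=[x^i]Q^\mathrm{d}_0(\olx)$. These extra $Q_{i,i}$ must then be eliminated, either directly when $\nu^\mathrm{d}_0$ has small $x$-degree (so that only $Q_{0,0}$ arises), or by re-substituting the identities from \cref{lem:revkrew_fos_pos_and_neg}. Clearing the remaining $\sqrt{\Delta_+}$ denominator by multiplying both sides by an appropriate polynomial factor yields \eqref{eqn:qx0_pos}, with $P_{x,0}$ emerging as the product of this rationalising polynomial and the natural denominator of $\mu_{x,0}/\sqrt{\Delta_+}$. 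The complementary $[x^{<0}]$ extraction kills $Q(x,0)$ outright and leaves the strictly-negative part of $\nu^\mathrm{d}_0\sqrt{\Delta_0\Delta_-}\,Q^\mathrm{d}_0(\olx)$ equal to the negative part of the RHS; applying $x\mapsto\olx$ (which turns $Q^\mathrm{d}_0(\olx)$ into $Q^\mathrm{d}_0(x)$) and again rationalising the remaining $\sqrt{\Delta_0\Delta_-}$ produces \eqref{eqn:qdx_neg}, with $P^\mathrm{d}_0$ arising from the analogous clearing step.

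The main obstacle is computational rather than conceptual: one must verify that the handful of $Q_{i,i}$ cross-terms produced during the $[x^{\geq 0}]$ extraction really do collapse to $Q_{0,0}$ (together with $Q_{0,1}$ and $Q_{1,0}$ via the full-orbit identities), and that after rationalising $\sqrt{\Delta_+}$ and $\sqrt{\Delta_0\Delta_-}$ the leading polynomial coefficients on the $Q(x,0)$ and $Q^\mathrm{d}_0(x)$ sides factor \emph{exactly} into the two explicit products displayed for $P_{x,0}$ and $P^\mathrm{d}_0$. This is best handled symbolically (and is a natural task for the accompanying \textsc{Mathematica} notebook); the conceptual content is entirely contained in the division-by-$\sqrt{\Delta_+}$ followed by a clean $[x^{\geq 0}]/[x^{<0}]$ split, which is the standard algebraic kernel method stripped of its weight-free simplifications.
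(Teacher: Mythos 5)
Your proposal follows essentially the same route as the paper: divide \eqref{eqn:rk_halforbitsum_y0} by $\sqrt{\Delta_+}$ so that the $\mu$ terms carry only non-negative powers of $x$ while the $\nu$ terms, now multiplied by $\sqrt{\Delta_0\Delta_-}$, leak only finitely many non-negative powers, then split into $[x^{\geq 0}]$ and $[x^{<0}]$ parts and send $x\mapsto\olx$ in the latter. The paper likewise leaves the explicit factorisations of $P_{x,0}$ and $P^\mathrm{d}_0$ (and the collapse of the finite leakage onto $Q_{0,0}$, $Q_{0,1}$, $Q_{1,0}$) to symbolic computation, so your treatment matches in both structure and level of rigour.
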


Now observe that~\eqref{eqn:qx0_pos} and~\eqref{eqn:qdx_neg} are in the form of kernel equations. The unknown functions $Q(x,0)$ and $Q^\mathrm{d}_0(x)$ are formal power series of $t$ with polynomial coefficients in $x$.

\begin{lemma}\label{lem:revkrew_twokernels_roots}
The kernel $P_{x,0}$ has two quadratic factors. The roots are 
\begin{align}
x_{1,2} &= \frac{\mp\sqrt{-4 a^4 t^3+4 a^3 t^3+a^2-2 a+1}+a-1}{2at(a-1)}\\
x_{3,4} &= \frac{\mp\sqrt{(a+b-2)^2-8 a b t^3(2ab-a-b)}+a+b-2}{2t(2ab-a-b)}
\end{align}
If $a>1$ (resp.~$0<a<1$) then $x_1$ (resp.~$x_2$) is a formal power series of $t$ and converges as $t\to0$. Similarly, if $a+b>2$ (resp.~$0<a+b<2$) then $x_3$ (resp.~$x_4$) is a formal power series of $t$ and converges as $t\to0$. 

The kernel $P^\mathrm{d}_0$ also has two quadratic factors, leading to two pairs of roots
\begin{align}
x_{5,6} &= \frac{\mp\sqrt{-4 a^4 t^3+4 a^3 t^3+a^2-2 a+1}+a-1}{2 a^2 t^2}\\
x_{7,8} &= \frac{\mp\sqrt{-4 b^4 t^3+4 b^3 t^3+b^2-2 b+1}+b-1}{2 b^2 t^2}.
\end{align}
If $a>1$ (resp.~$0<a<1$) then $x_5$ (resp.~$x_{6}$) is a formal power series of $t$ and converges as $t\to0$. Similarly, if $b>1$ (resp.~$0<b<1$) then $x_{7}$ (resp.~$x_{8}$) is a formal power series of $t$ and converges as $t\to0$.
\end{lemma}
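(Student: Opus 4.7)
Since $P_{x,0}$ and $P^\mathrm{d}_0$ are both exhibited as products of two factors each of which is manifestly quadratic in $x$, the plan is simply to apply the quadratic formula to each of the four quadratic factors in turn and read off the four pairs of roots. A short rationalisation of the numerator puts each pair into the form displayed in the lemma, and this step is purely algebraic.

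The substantive task is to determine, in each pair, which root extends to a formal power series in $t$ that is finite at $t=0$. In every case the discriminant has constant (in $t$) term equal to a perfect square: $(a-1)^2$ for $x_{1,2}$ and $x_{5,6}$, $(a+b-2)^2$ for $x_{3,4}$, and $(b-1)^2$ for $x_{7,8}$. I would apply the standard convention $\sqrt{u^2 + \bigO(t^3)} = |u| + \bigO(t^3)$ and then expand. For instance,
\[\sqrt{(a-1)^2 - 4a^3(a-1)t^3} \;=\; |a-1| - \frac{2a^3(a-1)}{|a-1|}\,t^3 + \bigO(t^6).\]
The numerator of $x_1$, which carries the $-\sqrt{\cdot}$, then equals $(a-1) - |a-1| + \bigO(t^3)$, vanishing at $t=0$ precisely when $a>1$; in that case the denominator $2at(a-1)$ also vanishes to first order in $t$, so the ratio is a genuine formal power series, namely $x_1 = \tfrac{a^2}{a-1}\,t^2 + \bigO(t^5)$. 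When $0<a<1$ the same analysis identifies $x_2$ as the power-series root, while $x_1$ acquires a simple pole $\sim 1/(at)$. The three remaining pairs are treated identically after substituting the appropriate analogue of $a-1$.

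The one detail requiring a moment of care — the main bookkeeping obstacle — is that for $x_{5,6}$ and $x_{7,8}$ the denominator $2a^2 t^2$ or $2b^2 t^2$ has order $t^2$, so the numerator must vanish to order at least $t^2$. A direct expansion for $a>1$ yields the numerator of $x_5$ as $2a^3 t^3 + \bigO(t^6)$ and the numerator of $x_6$ as $2(a-1) - 2a^3 t^3 + \bigO(t^6)$, giving $x_5 = at + \bigO(t^4)$ as a power series while $x_6$ has a genuine double pole; and the $x_{7,8}$ case is identical with $a$ replaced by $b$. Finally, the claim that the small root ``converges as $t\to 0$'' is automatic, since in each pair the small root is an algebraic function whose defining quadratic has nonzero coefficient at the leading (non-vanishing) power of $t$, so the implicit function theorem guarantees that it coincides locally with a convergent analytic function.
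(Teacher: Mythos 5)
Your proposal is correct and matches what the paper intends: the paper states this lemma without proof, treating it as a routine application of the quadratic formula to the two quadratic factors of each kernel followed by a series expansion of the discriminant about $t=0$ to identify the power-series root, which is exactly what you carry out (and your expansions, e.g.\ $x_1 = \tfrac{a^2}{a-1}t^2 + \bigO(t^5)$ and $x_5 = at + \bigO(t^4)$, check out).
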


\begin{proof}[Proof of \cref{1111}]
For any $a,b>0$ with $a\neq1,b\neq1$ and $a+b\neq2$, by substituting appropriate roots into~\eqref{eqn:qx0_pos} and~\eqref{eqn:qdx_neg}, we get a set of four equations of the form
\begin{equation}
0 = \zeta^{(i)} + \zeta^{(i)}_{0,0}Q(0,0) + \zeta^{(i)}_{0,1}Q_{0,1} + \zeta^{(i)}_{1,0}Q_{1,0} \equiv H_i,
\end{equation}
where $i$ indicates which $x_i$ root has been used in~\eqref{eqn:qx0_pos} or~\eqref{eqn:qdx_neg}.

For simplicity assume $a,b>1$, so that $i\in\{1,3,5,7\}$. The final question, then, is which combination(s) of these equations, if any, give a solution. That is, for which $i,j,k$ is the determinant
\begin{equation}
D_{i,j,k} = \zeta^{(i)}_{0,0} \zeta^{(j)}_{0,1} \zeta^{(k)}_{1,0} - \zeta^{(i)}_{0,0} \zeta^{(j)}_{1,0} \zeta^{(k)}_{0,1} - \zeta^{(i)}_{0,1} \zeta^{(j)}_{0,0} \zeta^{(k)}_{1,0} + \zeta^{(i)}_{0,1} \zeta^{(j)}_{1,0} \zeta^{(k)}_{0,0} + \zeta^{(i)}_{1,0} \zeta^{(j)}_{0,0} \zeta^{(k)}_{0,1} - \zeta^{(i)}_{1,0} \zeta^{(j)}_{0,1} \zeta^{(k)}_{0,0}
\end{equation}
non-zero? Curiously, it appears that $D_{1,3,5}=0$ while
\begin{align}
D_{1,3,7} &= \frac{16a^6b^4c^2(a-1)(a-2)(a-b)^2(ab-1)t^{10}}{a+b-2} + \bigO(t^{11}) \\
D_{1,5,7} &= -8a^8b^3c^2(a-1)^2(b-1)^2(a-b)^2(ab-a+1)t^{10} + \bigO(t^{11}) \\
D_{3,5,7} &= -\frac{16a^7b^4c^2(a-1)^2(b-1)^2(a-b)^2(ab-1)t^{10}}{a+b-2} + \bigO(t^{11})
\end{align}
(These determinants depend on the choice of $\sigma_{x,0}$ and $\tau^\mathrm{d}_0$ from~\eqref{eqn:qx0_pos} and~\eqref{eqn:qdx_neg}. What is important is whether they are 0 or not.)

Choosing one of the valid combinations gives algebraic solutions to $Q(0,0)$, $Q_{0,1}$ and $Q_{1,0}$. By back-substitution into~\eqref{eqn:qx0_pos} we then get the solution to $Q(x,0)$, and then by symmetry $Q(0,x)$ (and hence $Q(0,y)$). Finally, the original equation~\eqref{1x1x} yields $Q(x,y)$. (To save space we will not explicitly write down the expression.) Because all involved terms are algebraic, $Q(x,y)$ is too, specifically over $\mathbb{C}(t,x,y,a,b,c)$.

If $a<1$ or $b<1$ then the roots which are not power series in $t$ can be swapped out as required. In all cases the determinants which were non-zero remain so.
\end{proof}

\subsection{Some special cases}

There are some special cases where things appear to break down, namely $a=1$, $b=1$ and $a+b=2$. When $a+b=2$ the second factor in $P_{x,0}$ loses its $x$ term. As a result, neither $x_3$ nor $x_4$ are valid power series roots of $P_{x,0}$, and we lose one of our equations in $Q(0,0)$, $Q_{0,1}$ and $Q_{1,0}$. However, if $(a,b) \neq (1,1)$, the remaining three equations are still valid, and the solution still emerges.

When $b=1$, the system simplifies dramatically. One finds that the coefficient $\sigma_{1,0}$ of $Q_{1,0}$ vanishes. The two equations then obtained (either $H_1$ and $H_3$, or $H_2$ and $H_4$, depending on $a$) are linearly independent, and the solution follows. Naturally the $a=1$ case is then just a reflection.

In addition to using the $[x^>]$ and $[x^<]$ parts of~\eqref{eqn:rk_halforbitsum_y0}, the $[x^0]$ part also provides an equation (say, $H_0$) with unknowns $Q(0,0),Q_{1,0},Q_{0,1}$. This is not in a kernel form (it has no dependence on $x$ or $y$), but it can be combined with the other equations discussed above. It turns out that the equation sets formed by $\{H_0,H_1,H_{7}\}$ or $\{H_0,H_5,H_7\}$ have non-zero determinants, and can thus also be used to generate the solutions.

\section{Kreweras walks}\label{sec:krew}

We now turn our attention to Kreweras walks, and attempt to obtain the solution using a similar method. We still use the algebraic kernel method, but the process is different because some symmetries have changed. 
\begin{itemize}
\item First, we recall the symmetry group of the kernel and use it to take the full-orbit sum. We then extract the $[x^{>}y^0]$ part (\cref{lem:krew_fos_pos_part}).
\item Secondly, we take a half-orbit sum and again take the $[y^0]$ terms. We use the equations obtained from the full-orbit sum to eliminate certain boundary terms, and then once again take the positive and negative parts with respect to $x$. This yields two equations which have a kernel-like form (\cref{lem:krew_two_kernel_equations}).
\item Thirdly, we cancel the kernels of these two equations (see \cref{lem:krew_twokernels_roots}) and find a set of linear equations. 
\item Finally, we substitute the result of $Q(0,0)$ into the equations obtained in the previous step and solve the linear equation set.
\end{itemize}

The first two steps are the same as for reverse Kreweras walks. The difference is that we are unable to directly solve the problem by the third step. So we need some extra work. We will show how to do this in the following section. Note that, as with reverse Kreweras walks, the unweighted case can be solved with the half-orbit sum alone~\cite{bousquet2010walks}.

For simplicity some notation from the previous sections will be reused -- no definitions carry over unless otherwise indicated.

\subsection{The functional equation and the symmetry group}

For Kreweras walks, the allowed steps are $\{\mathrm{NE, S, W}\}$. Recall the functional equation~\eqref{eqn:krew_func_eqn}:
\begin{multline}
\left(1-t(xy+\olx+\oly)\right)Q(x,y)=\frac1c + \frac1a(a-1 - ta\oly)Q(x,0) + \frac1b(b - 1 - tb\olx)Q(0,y) \\
+\frac{1}{abc}(ac+bc-ab-abc)Q(0,0).
\end{multline}
The symmetry group for Kreweras walks is the same as for reverse Kreweras walks.

\subsection{Full-orbit sum}

We again apply the symmetries to the functional equation and write them in a matrix form
\begin{equation}\label{Kr1}
 K(x,y)\mathbf{Q}=\mathbf{MV}+\mathbf{C}.
 \end{equation}
Here $\mathbf{Q}$ and $\mathbf{V}$ are as per~\eqref{eqn:rev_Q_vec}, while $\mathbf{C}$ is just the constant vector of
\begin{equation}
\frac{(-ab+ac+bc-abc)Q(0,0)}{abc} + \frac1c,
\end{equation}
and $\mathbf{M}$ is as per~\eqref{eqn:rev_M_matrix} but now with $A'(x,y)=\frac1a(a-1-ta\oly)$ and $B'(x,y)=\frac1b(b-1-tb\olx)$.

For Kreweras walks, we still have $\det(\mathbf M)=0$, which means we can find a linear combination of these equations that cancels all variables in $\mathbf{V}$. The nullspace is spanned by the vector
\begin{equation}
\mathbf{N}=\begin{pmatrix}
(a t + x - a x) (b t + y - b y) (1 - a + a t x y) (1 - b + b t x y)\\
-\olx(a t + x - a x) (b t + x - b x) (b t + y - b y) (1 - a + a t x y)\\
\olx(a t + x - a x) (b t + x - b x) (a t + y - a y) (1 - b + b t x y)\\
-(b t + x - b x) (a t + y - a y) (1 - a + a t x y) (1 - b + b t x y)\\
\oly(b t + x - b x) (a t + y - a y) (b t + y - b y) (1 - a + a t x y)\\
-\oly(a t + x - a x) (a t + y - a y) (b t + y - b y) (1 - b + b t x y)
\end{pmatrix}^\mathsf{T}
\end{equation}
As with reverse Kreweras, left-multiplying by $\mathbf{N}$ gives 
\begin{equation}\label{kreweras_multiply_N}
K(x,y)\mathbf{N}\mathbf{Q} = \mathbf{N}\mathbf{C}. 
\end{equation}
But now the difference becomes apparent. For reverse Kreweras, we had $\mathbf{NC}=0$ (also true for unweighted  Kreweras walks), but here we have 
\begin{equation}
\mathbf{NC}=\frac{t^3 \olxy(a-b) (x-y) \left(x^2 y-1\right) \left(x y^2-1\right) \big[ab- (a b - a c - b c + a b c)Q(0,0)\big]}{c}.\label{NCK}
\end{equation}
This is more complicated than reverse Kreweras (except, of course, when $a=b$), but it is still in a form that we can deal with. Analogously to the half-orbit sum in the reverse Kreweras case, we divide~\eqref{kreweras_multiply_N} by the kernel and take the $[y^0]$ part. After simplification by boundary relations, the left hand side is a linear equation of the form
\begin{equation}\label{kreweras simplified orbit sum}
[y^0]\mathbf{NQ} = \alpha + \alpha_{0,0}Q(0,0) + \alpha_{0,x}Q(0,x) + \alpha_{x,0}Q(x,0) + \alpha^\mathrm{d}_0 Q^\mathrm{d}_0(\olx) + \alpha^\mathrm{d}_1 Q^\mathrm{d}_1(\olx)
\end{equation}
where the $\alpha$ coefficients are Laurent polynomials in $x$ and $t$. For the RHS, since $\mathbf{NC}$ does not contain $Q(0,y)$, it can be regarded as a polynomial of $y$, namely
\begin{equation}
\mathbf{NC} = v_ {-1} y^{-1} + v_ {0} y^{0} +  v_ {1} y^{1} +  v_ {2} y^{2} + v_3y^3.
\end{equation}

As with reverse Kreweras walks we can also define $K_i=[y^i]\frac{1}{K(x,y)}$.
It follows that
\begin{align}
[y^0]\left(\frac{\mathbf{NC}}{K(x,y)}\right) &= v_ {-1} K_1 + v_{0} K_0 +  v_{1} K_{-1} +  v_{2}K_{-2} + v_{3}K_{-3} \\
&= \eta + \eta_{0,0}Q(0,0)\label{eqn:krew_fullOS_rhs_y0}
\end{align}
where the $\eta$ coefficients are Laurent polynomials in $x,t$ and the $K_i$.

The main result obtained from the full-orbit sum is the following.
\begin{lemma}\label{lem:krew_fos_pos_part}
The $[x^>]$ part of~\eqref{kreweras simplified orbit sum} can be written as
\begin{equation}\label{krew_fullOS_lhs_xposy0}
[x^>y^0]\mathbf{NQ} = \beta + \beta_{x,0}Q(x,0) + \beta_{0,x}Q(0,x) + \beta_{0,0}Q(0,0) + \beta_{1,0}Q_{1,0} + \beta_{2,0}Q_{2,0} + \beta_{3,0} Q_{3,0}
\end{equation}
where the $\beta$ coefficients are Laurent polynomials in $t,x$. The $[x^>]$ part of~\eqref{eqn:krew_fullOS_rhs_y0} can be written as
\begin{equation}\label{krew_fullOS_rhs_xposy0}
[x^>y^0]\left(\frac{\mathbf{NC}}{K(x,y)}\right) = \theta + \theta_{0,0}Q(0,0).
\end{equation}
where $\theta$ and $\theta_{0,0}$ are D-finite. That is, the vector space over $\mathbb{C}(t,x,a,b,c)$ spanned by all partial derivatives of $\theta$ (with respect to $t$ and $x$ only) has finite dimension, and likewise for $\theta_{0,0}$.
\end{lemma}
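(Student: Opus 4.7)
The plan is to extract the $[x^>]$ part of both sides of the $y=0$ orbit-sum equation (given in \cref{kreweras simplified orbit sum,eqn:krew_fullOS_rhs_y0}) and identify the results with \cref{krew_fullOS_lhs_xposy0,krew_fullOS_rhs_xposy0} respectively. The analysis splits naturally into a computation for the LHS and a closure argument for the RHS.

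For \cref{krew_fullOS_lhs_xposy0}, I would go through \cref{kreweras simplified orbit sum} term-by-term, exploiting the fact that each unknown has a known support in $x$: $Q(0,0)$ is independent of $x$; $Q(x,0)$ and $Q(0,x)$ are formal power series in $x$; and $Q^\mathrm{d}_0(\olx)$, $Q^\mathrm{d}_1(\olx)$ are formal power series in $\olx$. Multiplication by the Laurent-polynomial coefficients $\alpha$ and subsequent $[x^>]$ extraction produces, respectively: (i) a polynomial-in-$x$ multiple of $Q(0,0)$; (ii) the scaled series $\beta_{x,0}Q(x,0)$ and $\beta_{0,x}Q(0,x)$ (where the $\beta$'s inherit the non-negative-$x$ parts of the $\alpha$'s), plus finite corrections arising when negative powers of $x$ in the $\alpha$'s interact with low-order coefficients of $Q(x,0)$, $Q(0,x)$, producing terms in $Q_{k,0}$ and $Q_{0,k}$ for finitely many small $k$; and (iii) polynomials in $x$ whose coefficients are linear combinations of diagonal values $Q_{k,k}$ and $Q_{k,k+1}$ (finitely many, bounded by the positive degree of the $\alpha^\mathrm{d}_j$). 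To reach the claimed form, I would then use the original functional equation \cref{eqn:krew_func_eqn}, expanded at small orders in $t$ or evaluated at specific points, to produce linear relations among low-index $Q_{i,j}$ and eliminate the $Q_{k,k}$, $Q_{k,k+1}$ and $Q_{0,k}$ ($k\geq 1$) in favour of the three $x$-axis values $Q_{1,0}, Q_{2,0}, Q_{3,0}$ together with $Q(0,0)$.

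For \cref{krew_fullOS_rhs_xposy0}, I would examine $[x^>](\eta + \eta_{0,0}Q(0,0))$ with $\eta$, $\eta_{0,0}$ as in \cref{eqn:krew_fullOS_rhs_y0}. By \cref{l1}, each $K_i$ equals $Y_j(x)^{-i}/\sqrt{\Delta(x)}$ for some $j\in\{0,1\}$ and hence is algebraic in $(x,t)$; the $v_i$ extracted from the explicit form of $\mathbf{NC}$ in \cref{NCK} are rational in $(x,t)$ and linear in $Q(0,0)$. Consequently $\eta$ and $\eta_{0,0}$ are algebraic functions of $(x,t)$ over $\mathbb{C}(t,x,a,b,c)$. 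I would then invoke the classical closure property that the positive-part operator in $x$ sends an algebraic (hence D-finite) function of $(x,t)$ to a D-finite one in $(x,t)$ over the same base field; applying it separately to the $Q(0,0)$-free and $Q(0,0)$-linear components of $\eta + \eta_{0,0}Q(0,0)$ yields D-finite $\theta$ and $\theta_{0,0}$ as claimed.

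The main obstacle is the bookkeeping for the LHS: one must verify that the diagonal and $y$-axis point values emerging from the $Q^\mathrm{d}_j(\olx)$ and $Q(0,x)$ contributions can in fact all be eliminated to leave exactly the three $x$-axis values in \cref{krew_fullOS_lhs_xposy0}, which requires controlling the precise degrees of the $\alpha^\mathrm{d}_j$ and the rank of the linear system coming from the low-order expansion of \cref{eqn:krew_func_eqn}. This is the type of computation best verified symbolically in the Mathematica notebook accompanying the paper. The RHS argument is by contrast conceptual and follows once the algebraicity of $\eta$ and $\eta_{0,0}$ is established and the appropriate closure theorem is cited.
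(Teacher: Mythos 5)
Your treatment of the second claim coincides with the paper's: the $\eta$ coefficients in \eqref{eqn:krew_fullOS_rhs_y0} are rational in $x$, $t$ and $\sqrt{\Delta}$, hence algebraic, and the positive-part operator in $x$ preserves D-finiteness (the paper cites Lipshitz, Theorem~3.7, for exactly this closure property). For the first claim your primary mechanism -- direct $[x^>]$ extraction, with the negative powers of $x$ in $\alpha_{x,0}$ hitting the low-order coefficients of $Q(x,0)$ to produce the corrections $Q_{1,0},Q_{2,0},Q_{3,0}$ -- is also the paper's argument, which is stated simply as ``follows immediately from the form of \eqref{kreweras simplified orbit sum}'': the explicit $\alpha$ coefficients have supports such that no other correction terms survive the extraction.

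The step I would push back on is your proposed clean-up of residual diagonal and $y$-axis point values via the functional equation. For Kreweras walks the last-step decompositions read $Q_{k,k}=t(Q_{k-1,k-1}+Q_{k,k+1}+Q_{k+1,k})$ and $Q_{0,k}=tb(Q_{0,k+1}+Q_{1,k})$ for $k\geq1$, so apart from the origin relation $Q_{0,0}=1+tc(Q_{0,1}+Q_{1,0})$ -- which does let you trade $Q_{0,1}$ for $Q_{0,0}$ and $Q_{1,0}$ -- these recurrences relate each low-index generating function to \emph{higher}-index unknowns, and the elimination you describe does not terminate. Expanding \eqref{eqn:krew_func_eqn} to low order in $t$ does not help either: each coefficient of $t^n$ pins down finitely many counting numbers, not relations among the full series $Q_{i,j}(t)$. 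So if the $[x^>]$ extraction genuinely produced, say, $Q_{2,2}$ or $Q_{0,2}$, your fallback would fail. The lemma holds because it does not: the real content is that $\alpha_{0,x}$ and the $\alpha^\mathrm{d}_j$ have the right supports in $x$ (so that their contributions either vanish under $[x^>]$ or reduce, via the origin relation, to terms already in \eqref{krew_fullOS_lhs_xposy0}), and this is precisely the explicit computation delegated to the accompanying notebook. Your proof should rest on verifying those supports rather than on an elimination argument.
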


\begin{proof}
The former follows immediately from the form of~\eqref{kreweras simplified orbit sum}. For the latter, note that the $\eta$ coefficients in~\eqref{eqn:krew_fullOS_rhs_y0} are algebraic functions of $t$ and $x$, being rational functions of $x,t$ and $\sqrt{\Delta}$, where
\begin{equation}
\Delta = (1-t\olx)^2-4t^2x.
\end{equation}
As the positive parts of algebraic functions, we can write $\theta$ and $\theta_{0,0}$ into series form and do know that they are D-finite (see, for example,~\cite[Theorem 3.7]{lipshitz1989d}). We have no reason to believe that they are algebraic.
\end{proof}

Equating~\eqref{krew_fullOS_lhs_xposy0} and~\eqref{krew_fullOS_rhs_xposy0}, we obtain an equation relating $Q(x,0), Q(0,x)$ and several $x$-independent unknowns -- the equivalent of~\eqref{(x,0) and (0,x)}. Unlike reverse Kreweras walks, it will not be necessary to also take the $[x^<y^0]$ part of the full-orbit sum.

\subsection{Half-orbit sum}

Following the same process as for reverse Kreweras walks, we now take the half-orbit sum. The process is nearly the same. We will also have equations similar to~\eqref{half}, and~\eqref{rkh} in this case. The half-orbit sum is
\begin{equation}
K(x,y)\mathbf{Q}=\mathbf{M}_2 \mathbf{V}_2+\mathbf{C}_2\label{half2}
\end{equation}
where
\begin{equation}
\mathbf{V}_2=\begin{pmatrix}
Q(0,y)\\
Q(\olxy,0)\\
Q(0,\olxy)\\
Q(y,0)\label{rkh2}
\end{pmatrix}
\end{equation}
We have two vectors spanning the nullspace. The one we choose is 
\begin{equation}
\mathbf{N_2}=
\begin{pmatrix}
(a t + x - a x) (1 - b + b t x y)\\
-\olx(a t + x - a x) (b t + x - b x)\\
0\\
0\\
\oly(b t + x - b x) (a t + y - a y)\\
0
\end{pmatrix}
\end{equation}
and then
\begin{equation}
\mathbf{N}_2\mathbf{Q}_2=\frac{\mathbf{N}_2\mathbf{C}_2}{K(x,y)}.\label{kf}
\end{equation}
We take the $[y^0]$ term of~\eqref{kf} analogously to the reverse Kreweras case, and end up with the equivalents of~\eqref{rev_halfOS_lhs_y0} and~\eqref{rev_halfOS_rhs_y0}:
\begin{align}
[y^0]\mathbf{N_2Q} &= \delta + \delta_{0,0}Q(0,0) + \delta_{x,0}Q(x,0) + \delta_{0,x}Q(0,x) + \delta^\mathrm{d}_0 Q^\mathrm{d}_0(\olx) \label{eqn:krew_halfOS_lhs_y0}\\
[y^0]\left(\frac{\mathbf{N}_2\mathbf{C}_2}{K(x,y)}\right) &= \epsilon + \epsilon_{0,0} Q(0,0) + \epsilon_{x,0}Q(x,0) + \epsilon_{0,x}Q(0,x). \label{eqn:krew_halfOS_rhs_y0}
\end{align}
As with reverse Kreweras walks, the $\delta$ coefficients are Laurent polynomials in $t,x$ while the $\epsilon$ coefficients are Laurent polynomials in $t,x$ and the $K_i$. (The absence of $Q^\mathrm{d}_1(\olx)$ in~\eqref{eqn:krew_halfOS_lhs_y0} is why we did not need to take the $[x^<y^0]$ part of the full-orbit sum.)

We then equate~\eqref{eqn:krew_halfOS_lhs_y0} and~\eqref{eqn:krew_halfOS_rhs_y0} and use~\eqref{krew_fullOS_lhs_xposy0} and~\eqref{krew_fullOS_rhs_xposy0} to eliminate $Q(0,x)$. This leads to an equation of the form
\begin{multline}\label{eqn:krew_Qx0_and_Qd1x}
\mu_{x,0}Q(x,0) + \nu^\mathrm{d}_0\sqrt{\Delta}Q^\mathrm{d}_0(\olx) = (\hat\mu+\hat\nu\sqrt{\Delta}) + (\hat\mu_{0,0}+\hat\nu_{0,0}\sqrt{\Delta})Q(0,0) \\ + (\mu_{1,0}+\nu_{1,0}\sqrt{\Delta})Q_{1,0} + (\mu_{2,0}+\nu_{2,0}\sqrt{\Delta})Q_{2,0} + (\mu_{3,0}+\nu_{3,0}\sqrt{\Delta})Q_{3,0},
\end{multline}
where the $\mu$ and $\nu$ coefficients are polynomials in $t,x$ and the $\hat\mu$ and $\hat\nu$ coefficients are D-finite functions, being polynomials in $t,x$, and $\theta$ or $\theta_{0,0}$ respectively.

We next apply the canonical factorisation to $\Delta$. There are three roots, of which two are Puiseux series which converge at 0 and one is not:
\begin{align}
X_1 &= t + 2t^{5/2} + 6t^4 + 21t^{11/2} + 80t^7 + \textstyle \frac{1287}{4}t^{17/2} + 1344t^{10} + \bigO(t^{21/2}) \\
X_2 &= t - 2t^{5/2} + 6t^4 - 21t^{11/2} + 80t^7 - \textstyle \frac{1287}{4}t^{17/2} + 1344t^{10} + \bigO(t^{21/2}) \\
X_3 &= \textstyle \frac14 t^{-2} - 2t - 12t^4 - 160t^7 - 2688t^{10} + \bigO(t^{11})
\end{align}
Following \cref{lem:canonical_factorisation} we then define
\begin{align}
\Delta_0 &= 4t^2X_3 \\
\Delta_+ &= 1 - x/X_3 \\
\Delta_- &= (1-X_1\olx)(1-X_2\olx)
\end{align}
so that
\begin{align}
\frac{1}{\sqrt{\Delta_+}} &= 1 + 2t^2x + 6t^4x^2 + 16t^5x + \bigO(t^6) \\
\sqrt{\Delta_0\Delta_-} &= 1 - t\olx -4t^3 -2t^4\olx - 2t^5\olx^2 + \bigO(t^6).
\end{align}

As with reverse Kreweras walks, we next divide~\eqref{eqn:krew_Qx0_and_Qd1x} by $\sqrt{\Delta_+}$ and extract the $[x^>]$ and $[x^<]$ parts. Unfortunately this has the side effect of introducing $Q_{4,0}$ as another unknown. 
\begin{lemma}\label{lem:krew_two_kernel_equations}
\begin{align}
\sigma_{x,0}P_{x,0}Q(x,0) &= \hat\sigma + \hat\sigma_{0,0}Q(0,0) + \sigma_{1,0}Q_{1,0} + \sigma_{2,0}Q_{2,0} + \sigma_{3,0}Q_{3,0} + \sigma_{4,0}Q_{4,0} \label{eqn:krew_xpos} \\
\tau^\mathrm{d}_0P^\mathrm{d}_0Q^\mathrm{d}_0(x) &= \hat\tau + \hat\tau_{0,0}Q(0,0) + \tau_{1,0}Q_{1,0} + \tau_{2,0}Q_{2,0} + \tau_{3,0}Q_{3,0} + \tau_{4,0}Q_{4,0} \label{eqn:krew_xneg}
\end{align}
where 
\begin{align}
P_{x,0} &= \begin{multlined}[t] (ta - ta^2 - x + ax - t^2a^2x^2)(tb - tb^2 - x + bx - t^2b^2x^2) \\ \times (ta + tb - 2tab - 2x + ax + bx - 2t^2abx^2) \end{multlined} \\
P^\mathrm{d}_0 &= (t^2a^2 + x - ax - tax^2 + ta^2x^2) (t^2b^2 + x - bx - tbx^2 + tb^2x^2),
\end{align}
the $\sigma$ and $\tau$ coefficients are algebraic functions of $t,x$ (in fact $\sigma_{4,0}$ and $\tau_{4,0}$ are polynomials), and the $\hat\sigma$ and $\hat\tau$ coefficients are D-finite functions of $t$ with non-negative powers of $x$.
\end{lemma}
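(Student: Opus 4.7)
The plan is to follow the same template as in the proof sketch preceding \cref{lem:revkrew_two_kernel_equations}, but now starting from the more intricate equation~\eqref{eqn:krew_Qx0_and_Qd1x}. First, divide~\eqref{eqn:krew_Qx0_and_Qd1x} through by $\sqrt{\Delta_+}$ using the canonical factorisation $\sqrt{\Delta}=\sqrt{\Delta_0\Delta_-}\,\sqrt{\Delta_+}$ guaranteed by \cref{lem:canonical_factorisation}. By the explicit expansions exhibited in \cref{sec:krew}, $1/\sqrt{\Delta_+}$ is a formal power series in $t$ with polynomial (non-negative power) coefficients in $x$, while $\sqrt{\Delta_0\Delta_-}$ is a power series in $t$ with coefficients that are polynomials in $\olx$. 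Consequently, each $\mu$-term of~\eqref{eqn:krew_Qx0_and_Qd1x}, divided by $\sqrt{\Delta_+}$, contributes only non-negative powers of $x$, while each $\nu$-term, now multiplied by $\sqrt{\Delta_0\Delta_-}$ alone, contributes only finitely many non-negative powers of $x$ (all coming from the polynomial $\nu$-coefficients). The positive and negative $x$-parts of the resulting equation can thus be extracted separately.

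Next, take the $[x^{>0}]$ and $[x^{<0}]$ parts. The $[x^{>0}]$ extraction isolates $(\mu_{x,0}/\sqrt{\Delta_+})Q(x,0)$ on the LHS up to a single $x^0$ correction (involving only $Q_{0,0}$), with a finite tail from the $\nu^\mathrm{d}_0\sqrt{\Delta_0\Delta_-}Q^\mathrm{d}_0(\olx)$ term; these tails also supply any missing diagonal $Q^\mathrm{d}$-coefficients, which are converted into point-boundary terms on the $x$-axis using~\eqref{krew_fullOS_lhs_xposy0}. Similarly, the $[x^{<0}]$ extraction isolates $\nu^\mathrm{d}_0\sqrt{\Delta_0\Delta_-}Q^\mathrm{d}_0(\olx)$ on the LHS, with the symbol $x\mapsto\olx$ then producing an equation for $Q^\mathrm{d}_0(x)$. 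Compared to the reverse Kreweras case, the larger $x$-degree of $\sqrt{\Delta_+}$ (and of the $\nu$-polynomials in~\eqref{eqn:krew_Qx0_and_Qd1x}) means that the truncation reaches one index further along the $x$-axis, introducing $Q_{4,0}$ as the sole new unknown beyond the $Q_{1,0},Q_{2,0},Q_{3,0}$ already present in~\eqref{eqn:krew_Qx0_and_Qd1x}.

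Finally, multiply the two resulting equations through by polynomial factors $P_{x,0}$ and $P^\mathrm{d}_0$ respectively, where these polynomials are chosen to clear the rational denominators introduced by the $1/\sqrt{\Delta_+}$ factors. The explicit quadratic factors listed in the statement are then recovered from the denominators that appear. The coefficient types follow their provenance in~\eqref{eqn:krew_Qx0_and_Qd1x}: the D-finite $\hat\mu,\hat\nu,\hat\mu_{0,0},\hat\nu_{0,0}$ (inherited from $\theta$ and $\theta_{0,0}$ via \cref{lem:krew_fos_pos_part}) transfer to D-finite $\hat\sigma,\hat\tau,\hat\sigma_{0,0},\hat\tau_{0,0}$; the polynomial/algebraic $\mu_{i,0},\nu_{i,0}$ combined with the algebraic $\sqrt{\Delta_0\Delta_-}$ produce algebraic $\sigma_{i,0},\tau_{i,0}$; and $\sigma_{4,0},\tau_{4,0}$ are polynomial because $Q_{4,0}$ enters only through the truncation of the polynomial part of the $\nu$-terms.

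The main obstacle is bookkeeping rather than conceptual difficulty: the individual polynomial, algebraic, and D-finite expressions involved are large, and the $[x^>]/[x^<]$ extractions must be tracked across multiple nested series. For this reason, the precise forms of $P_{x,0}$, $P^\mathrm{d}_0$, and each of the $\sigma,\tau,\hat\sigma,\hat\tau$ coefficients are best verified with the accompanying {\sc Mathematica} notebooks rather than by hand.
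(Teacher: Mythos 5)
Your proposal follows essentially the same route as the paper, which itself only sketches this step: divide \eqref{eqn:krew_Qx0_and_Qd1x} by $\sqrt{\Delta_+}$ via the canonical factorisation, observe that the $\mu$-terms then carry only non-negative powers of $x$ while the $\nu$-terms carry only finitely many, extract the $[x^>]$ and $[x^<]$ parts (picking up $Q_{4,0}$ as the extra unknown), send $x\mapsto\olx$ in the second equation, and track the polynomial/algebraic/D-finite provenance of each coefficient, deferring the explicit forms of $P_{x,0}$, $P^\mathrm{d}_0$ and the $\sigma,\tau$ coefficients to the accompanying notebooks. The only quibbles are minor points of bookkeeping (e.g.\ the precise mechanism by which the residual diagonal terms and $Q_{4,0}$ enter), which the paper likewise leaves to the computer verification.
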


We then have analogue of \cref{lem:revkrew_twokernels_roots}.
\begin{lemma}\label{lem:krew_twokernels_roots}
The roots of $P_{x,0}$ are
\begin{align}
x_{1,2} &= \frac{\mp\sqrt{-4 a^4 t^3+4 a^3 t^3+a^2-2 a+1}+a-1}{2 a^2 t^2}\\
x_{3,4} &= \frac{\mp\sqrt{-4 b^4 t^3+4 b^3 t^3+b^2-2 b+1}+b-1}{2 b^2 t^2}\\
x_{5,6} &= \frac{\mp\sqrt{(a+b-2)^2-8 a b t^2 (2 a b t-a t-b t)}+a+b-2}{4 a b t^2}
\end{align}
and the roots of $P^\mathrm{d}_0$ are 
\begin{align}
x_{7,8} &= \frac{\mp\sqrt{-4 a^4 t^3+4 a^3 t^3+a^2-2 a+1}+a-1}{2at\left(a-1\right)}\\
x_{9,10} &= \frac{\mp\sqrt{-4 b^4 t^3+4 b^3 t^3+b^2-2 b+1}+b-1}{2bt\left(b-1\right)}.
\end{align}
If $a>1$ (resp.~$0<a<1$) then $x_1$ and $x_{7}$ (resp.~$x_2$ and $x_{8}$) are power series in $t$; if $b>1$ (resp.~$0<b<1$) then $x_3$ and $x_{9}$ (resp.~$x_4$ and $x_{10}$) are power series in $t$; and if $a+b>2$ (resp.~$0<a+b<2$) then $x_5$ (resp.~$x_{6}$) is a power series in $t$.
\end{lemma}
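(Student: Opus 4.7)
The plan is to apply the quadratic formula to each quadratic factor of $P_{x,0}$ and $P^\mathrm{d}_0$, obtain closed-form root expressions, and then analyze the leading-order behaviour at $t=0$ to classify which roots are genuine formal power series.

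First, $P_{x,0}$ is already presented as a product of three polynomials, each manifestly quadratic in $x$. Writing each factor in the form $\alpha(t)x^2+\beta(t)x+\gamma(t)$ and applying $x=\bigl(-\beta\pm\sqrt{\beta^2-4\alpha\gamma}\bigr)/(2\alpha)$ reproduces $x_{1,2}$, $x_{3,4}$ and $x_{5,6}$ after routine simplification of the three discriminants (which collapse to the stated expressions by factoring $(a-1)^2$, $(b-1)^2$ and $(a+b-2)^2$ out of the constant term). The same procedure applied to the two quadratic factors of $P^\mathrm{d}_0$ yields $x_{7,8}$ and $x_{9,10}$. This first half is a mechanical algebraic verification.

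Second, for the classification of which roots are formal power series in $t$, observe that each discriminant reduces at $t=0$ to a perfect square: $(a-1)^2$, $(b-1)^2$, or $(a+b-2)^2$. Provided the corresponding parameter combination is nonzero, the square root expands as $\sqrt{(a-1)^2-4a^3(a-1)t^3} = |a-1|\bigl(1+O(t^3)\bigr)$, which is a bona fide power series in $t$ rather than a Puiseux series, precisely because the perturbation sits at order $t^3$ and is divided by the nonzero constant $(a-1)^2$ in the binomial expansion. The numerator $\mp\sqrt{D}+(a-1)$ then has a vanishing constant term precisely when $\mp|a-1|+(a-1)=0$, i.e.\ when the minus sign is taken and $a>1$, or when the plus sign is taken and $0<a<1$. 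Dividing by the denominator (of order $t^2$ for $x_{1,2}$ and of order $t$ for $x_{7,8}$, etc.) then produces an ordinary formal power series that converges at $t=0$; the opposite sign choice leaves a nonzero leading term in the numerator, making the root diverge like $t^{-2}$ or $t^{-1}$. The same analysis with $b$ in place of $a$ treats $x_{3,4}$ and $x_{9,10}$, and the analogous reasoning with $a+b-2$ treats $x_{5,6}$.

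The main obstacle here, more of a bookkeeping matter than a conceptual one, is verifying in each family that the vanishing order of the numerator is at least the vanishing order of the denominator. Explicitly, the next term in the expansion of $\sqrt{(a-1)^2-4a^3(a-1)t^3}$ for $a>1$ is $-2a^3 t^3$, so after cancellation of the constant term the numerator begins at order $t^3$; dividing by $2a^2t^2$ yields a leading $O(t)$ term for $x_1$ and dividing by $2at(a-1)$ yields a leading $O(t^2)$ term for $x_7$, both genuine power series without fractional exponents. The degenerate parameter values $a=1$, $b=1$, and $a+b=2$, at which either a leading coefficient of a quadratic factor vanishes or the constant part of a discriminant degenerates, fall outside the hypotheses of the lemma and are naturally excluded; they will be handled separately in the special-case analysis that follows.
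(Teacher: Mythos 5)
Your proof is correct and follows exactly the route the paper intends: the paper states this lemma without proof, treating it as the routine analogue of \cref{lem:revkrew_twokernels_roots}, namely applying the quadratic formula to each quadratic factor and checking that the discriminants reduce to perfect squares at $t=0$ so that one root in each pair is a genuine power series of positive valuation. Your explicit verification that the numerator vanishes to order $t^3$ while the denominators have order $t^2$ or $t$ (so the quotients are $O(t)$ and $O(t^2)$ respectively) is the small bookkeeping step the paper leaves implicit, and it is carried out correctly.
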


For any $a,b>0$ with $a\neq1,b\neq1$ and $a+b\neq2$, we thus get a set of five equations of the form
\begin{equation}\label{eqn:krew_after_cancelling_kernels}
0 = \zeta^{(i)} + \zeta^{(i)}_{0,0}Q(0,0) + \zeta^{(i)}_{1,0}Q_{1,0} + \zeta^{(i)}_{2,0}Q_{2,0} + \zeta^{(i)}_{3,0}Q_{3,0} + \zeta^{(i)}_{4,0}Q_{4,0} \equiv H_i,
\end{equation}
where $i$ indicates which $x_i$ root has been used in~\eqref{eqn:krew_xpos} or~\eqref{eqn:krew_xneg}.

Unfortunately, the determinant of the corresponding $5\times5$ matrix of coefficients appears to be 0. We can even include a sixth equation (say, $H_0$), by taking the $[x^0]$ part of~\eqref{eqn:krew_Qx0_and_Qd1x}, but there is still no set of five linearly independent equations.

\subsection{Incorporating the solution to reverse Kreweras walks}

By \cref{cor:krew00}, the generating function $Q(0,0)$ is the same for Kreweras and reverse Kreweras walks. Since we have solved reverse Kreweras walks, $Q(0,0)$ is actually known. We substitute it into~\eqref{eqn:krew_after_cancelling_kernels} and then we have six equations with four unknowns, and hence $15$ different equation sets.

\begin{proof}[Proof of \cref{thm:krew_dfinite}]
Substituting and expanding reveals that (at least) 10 of the equations sets yield a non-zero determinant. For example,
\begin{align}
D_{1,3,5,7} &= \frac{16a^{12}b^5 c^4 (a-1)^3(a-2)(b-1)^5  (a-b)^4 (a b-a-b) (2 a b-a-b)^5t^{26} }{(a+b-2)^4} + \bigO(t^{27})
\end{align}
(As with reverse Kreweras walks, this determinant depends on the the choice of $\sigma_{x,0}$ and $\tau^\mathrm{d}_0$.)

Any set of equations with a non-zero determinant then leads to a solution for $Q_{1,0}$, $Q_{2,0}$, $Q_{3,0}$ and $Q_{4,0}$. Back-substitution yields $Q(x,0)$, and by reflective symmetry, $Q(0,y)$. The original functional equation then gives $Q(x,y)$. If $a<1$ or $b<1$ then the roots which are not power series in $t$ can be replaced as required, and the resulting determinants remain non-zero.

Since $\zeta^{(i)}_{1,0},\zeta^{(i)}_{2,0},\zeta^{(i)}_{3,0}$ and $\zeta^{(i)}_{4,0}$ are algebraic while $\zeta^{(i)} + \zeta^{(i)}_{0,0}Q(0,0)$ is D-finite, the resulting solutions to $Q_{1,0},Q_{2,0},Q_{3,0}$ and $Q_{4,0}$ are D-finite. By back-substituting we find that the same is true for $Q(x,0), Q(0,y)$ and $Q(x,y)$. More specifically, the vector space over $\mathbb{C}(t,x,y,a,b,c)$ spanned by all partial derivatives of $Q(x,y)$ (with respect to $t,x,y$ only) is finite-dimensional. We do not (yet) have a proof that any of these functions are \emph{not} algebraic.
\end{proof}

\subsection{Special cases}

The special values of $a,b$ work in much the same way as for reverse Kreweras. When $a+b=2$ the third factor in $P_{x,0}$ loses its $x$ term, and as a result $x_{5,6}$ are not longer valid kernel roots. However, there remain three combinations which do not use these roots, and the solution can be obtained from any of those. 

If $b=1$ then $\sigma_{3,0} = \sigma_{4,0} = \tau_{3,0} = \tau_{4,0} = 0$, and we thus only need two independent equations to solve for the two unknowns $Q_{1,0}$ and $Q_{2,0}$. Using $H_1$ and $H_{7}$ (or $H_2$ and $H_{8}$) suffices. Naturally $a=1$ is just a reflection of the $b=1$ case.

When $a=b$, note that~\eqref{NCK} vanishes, and hence $\eta=\eta_{0,0}=\theta=\theta_{0,0}=0$. As was the case for reverse Kreweras walks, all coefficients in~\eqref{eqn:krew_Qx0_and_Qd1x} are then algebraic, and it follows that the resulting solution is too. Note that this has already been established for the $c=a^2=b^2$ case in~\cite{beaton_exact_2019}.

\section{Discussion}\label{sec:discussion}

\subsection{A generic equation}\label{6.1}

Observe that in solving both Kreweras and reverse Kreweras walks, an equation of the form
\begin{equation}\label{eqn:generalized_main_eqn}
\Lambda_F\Lambda_{FG} F(x)= \Lambda_{FG}\Lambda_G\sqrt{\Delta}G(\olx)+\sqrt{\Delta}A(x,U_1,\dots,U_k)+B(x,U_1,\dots,U_k)
\end{equation}
arose (namely, \eqref{eqn:rk_halforbitsum_y0} and \eqref{eqn:krew_Qx0_and_Qd1x}). Here,
\begin{itemize}
\item $F(x)$ and $G(\olx)$ are series in $t$ whose coefficients are polynomials in $x$ and $\olx$ respectively;
\item $\Lambda_F$, $\Lambda_{FG}$ and $\Lambda_G$ are products of distinct irreducible quadratic polynomials in $x$;
\item $\Delta$ is an irreducible cubic polynomial in $x$ (possibly divided by some power of $x$);
\item the $U_i$ are boundary terms of $F$ and $G$, ie.~$[x^j]F(x)$ or $[x^j]G(\olx)$ for some $j$; and
\item $A(\cdots)$ and $B(\cdots)$ are linear combinations of unknowns $U_1,\dots,U_k$ with coefficients that are polynomial in $x,t$ in~\eqref{eqn:rk_halforbitsum_y0} and D-finite in~\eqref{eqn:krew_Qx0_and_Qd1x}.
\end{itemize}

For both models we then obtained two kernel-like equations by taking either the positive or negative part of~\eqref{eqn:generalized_main_eqn} with respect to $x$. For reverse Kreweras this led to exactly three linearly independent equations in the $U_i$, and since $k=3$, this gave the solution. For Kreweras we obtained four independent equations, but while we initially had $k=4$, a fifth boundary term emerged when taking the positive and negative parts. It was only because we already knew the (algebraic) solution to one of the $U_i$ that we were able to solve the system.

A generic equation of this form can be contrasted with the results of~\cite{bousquet2006polynomial}. Suppose we have a single polynomial equation
\begin{equation}
P(Q(x,t),Q_1,\dots,Q_k,t,x)=0,\label{eq}
\end{equation}
and seek to determine all unknowns $Q(x,t)$ and $Q_k$. It has been proved that under some mild assumptions regarding the form of the equation, if one can find $k$ distinct formal series $X_i(t)$ ($i=1,\dots,k)$ that make
\begin{equation}
\left.\frac{\partial P}{\partial x_0}P(x_0,Q_1,\dots,Q_k,t,X_i)\right|_{x_0=Q(X_i,t)} =0,
\end{equation}
then these $k+1$ series are algebraic. (For a kernel-like equation obtained from a lattice walk problem, Banderier and Flajolet~\cite{banderier2002basic} point out that the determinant arising from the kernel method forms a 
Vandermonde determinant.)

The conditions under which~\eqref{eqn:generalized_main_eqn} is solvable seem to be more complicated. The positive and negative parts can be written as 
\begin{equation}\label{eqn:generalized_pos_eqn}
\Lambda_F\Lambda_{FG} \frac{F(x)}{\sqrt{\Delta_+}} =  G_+(x,U_1,\dots,U_k,\dots,U_\ell) + A_+(x,U_1,\dots,U_k) + \frac{B(x,U_1,\dots,U_k)}{\sqrt{\Delta_+}}
\end{equation}
\begin{multline}\label{eqn:generalized_neg_eqn}
0 = \Lambda_{FG}\Lambda_G\sqrt{\Delta_0\Delta_-}G(\olx) - G_+(x,U_1,\dots,U_k,\dots,U_\ell) \\ + \sqrt{\Delta_0\Delta_-}A(x,U_1,\dots,U_k)-A_+(x,U_1,\dots,U_k)
\end{multline}
where
\begin{itemize}
\item $\Delta = \Delta_0\Delta_+\Delta_-$ is the canonical factorisation (see \cref{lem:canonical_factorisation});
\item $G_+(\cdots)$ is the non-negative part of $\Lambda_{FG}\Lambda_G\sqrt{\Delta_0\Delta_-}G(\olx)$ -- this can be expressed as a linear combination of boundary terms of $G(\olx)$  with polynomial coefficients; and
\item $A_+(\cdots)$ is the non-negative part of $\sqrt{\Delta_0\Delta_-}A(x)$ -- this is a linear combination of the unknowns $U_i$ with coefficients of the same type as in $A(\cdots)$ (ie.~polynomials or D-finite functions).
\end{itemize}

Generically, this system is solvable if exactly $\ell$ linearly independent equations can be obtained by substituting values of $x$ which cancel $\Lambda_F$, $\Lambda_{FG}$ or $\Lambda_G$, while leaving $F(x)$ or $G(\olx$) as power series in $t$. We have yet to understand exactly what constitutes a set of sufficient conditions to guarantee this.

\subsection{Comparison with other models}

We may also ask for which other models an equation of the form~\eqref{eqn:generalized_main_eqn} arises. In particular, since the separation of~\eqref{eqn:generalized_main_eqn} into positive and negative parts depends on the canonical factorisation of $\sqrt{\Delta}$, the coefficient of $G(\olx)$ has to be a polynomial or a polynomial multiplying $\sqrt{\Delta}$. A factor like $C(x,t)+D(x,t)\sqrt{\Delta}$ (with $C,D\in \mathbb{R}(x,t)$) will prevent us from finding a solution. This is because in an equation of the form
\begin{equation}
\frac{\Lambda_F\Lambda_{FG}}{\Delta_+} F(x)=\left(\frac{C}{\sqrt\Delta_+}+D\sqrt{\Delta_0\Delta_-}\right)G(\olx)+\sqrt{\Delta_0\Delta_-}A(x,U_1,\dots,U_k)+\frac{B(x,U_1,\dots,U_k)}{\sqrt\Delta_+},
\end{equation}
the first term in the RHS is still a product of a formal series in $x$ and a series in $\olx$.

For those models in~\cite{beaton_exact_2019} which were not solved exactly (except Kreweras and reverse Kreweras, which we have solved here), it is straightforward to show that the determinant of $\mathbf M$ (recall \eqref{K(x,y)Q=MV+C}) is nonzero. That is, it is not possible to eliminate all terms of the form $Q(\bullet,0)$ and $Q(0,\bullet)$ in a full-orbit sum. Without loss of generality, assume that $Q(x,0)$ is the term left on the RHS of the full-orbit sum. The full-orbit sum can be written as
\begin{equation}
\mathbf{N}\mathbf{Q}=\frac{P_1(Q(x,0),x,y,t)}{K(x,y)},
\end{equation}
where $P_1$ is a linear function of $Q(x,0)$ with polynomial coefficients in $x,y,t$. When taking the $[y^0]$ part of this equation, the LHS is similar to \eqref{rev_halfOS_lhs_y0}:
\begin{equation}\label{sec6 orbit-sum}
[y^0]\mathbf{NQ} = \delta + \delta_{0,0}Q(0,0) + \delta_{x,0}Q(x,0)  + \delta^\mathrm{d}_0 Q^\mathrm{d}_0(\olx).
\end{equation}
We have $Q^\mathrm{d}_0(\olx)$ and $Q(x,0)$.
All the coefficients on the LHS are polynomials and do not contain $\sqrt{\Delta}$. But on the RHS, we must apply \cref{l1} when finding the $[y^0]$ part, and therefore we have
\begin{equation}
[y^0]\frac{P_1(Q(x,0),x,y,t)}{K(x,y)}=\frac{1}{\sqrt{\Delta}}P_2(Q(x,0),x,t).
\end{equation}
The coefficient of $Q(x,0)$ contains $\sqrt{\Delta}$. Combining the LHS and RHS together, we will get an equation with polynomial coefficients on $Q^\mathrm{d}_0(\olx)$ and $C+D\sqrt{\Delta}$ on $Q(x,0)$, or vice versa.

We now turn to the cases which have been solved using the kernel method, both in this paper and in~\cite{beaton_exact_2019}. For all algebraic cases that we know currently~\cite{beaton_exact_2019,bousquet2010walks}, the full-orbit sum is $0$. The half-orbit sum leads to an equation in the form of~\eqref{eqn:generalized_main_eqn} (except Gessel walks, which are solved by different ideas in \cite{bousquet2016elementary} and \cite{kurkova2011explicit}) where the coefficient of $Q^\mathrm{d}_0(\olx)$ is $\sqrt{\Delta}$ times a polynomial. We can walk through the algebraic kernel method and solve the problem.

For all D-finite (except Kreweras walks) and D-algebraic cases, the RHS of the full-orbit sum does not vanish. But it does not contain $Q(x,0)$. The orbit sum can be written as
\begin{equation}
\mathbf{N}\mathbf{Q}=\frac{P_1(x,y)}{K(x,y)}.
\end{equation}
After taking the $[y^0]$ part of it, we have
\begin{equation}
[y^0]\frac{P_1(x,y)}{K(x,y)}=\frac{1}{\sqrt{\Delta}}P_2(x,t).
\end{equation}
The LHS can still be represented by \eqref{sec6 orbit-sum}. We do have $\sqrt\Delta$ in the RHS but the coefficients of $Q(x,0)$ and $Q^d_0(\olx)$ all come from the LHS. So the coefficient of $Q^d_0(\olx)$ is a polynomial. We cannot divide the $\sqrt{\Delta_+}$ on both hand side since the LHS contains $Q^d_0(\olx)$. However we can directly take the $[x^>]$ or $[x^<]$ part of this equation. This leads to the D-finite terms on the RHS.

After the separation one attempts to cancel the kernels and (hopefully) solve a set of linear equations. Let us temporarily ignore the previous discussion in \cref{6.1}, and assume that we can find a solution. This means if $A,B$ and the $\Lambda$ terms are algebraic in~\eqref{eqn:generalized_main_eqn}, the final solution is algebraic. If they are D-finite, the final solution is D-algebraic. In order to distinguish the D-finite and D-algebraic cases, we need check the coefficients of the unknowns $U_{i}$ in this equation. By the process of the algebraic kernel method, we find all coefficients of $U_i$ are polynomials of $x,t$ except possibly $Q(0,0)$. This is because $Q(0,0)$ is the only one of the $U_i$ which can appear in both sides of the full-orbit sum. 
So, we conclude, if the coefficient of $Q(0,0)$ on the RHS of the full-orbit sum is $0$, the solution is D-finite. Otherwise, the solution is D-algebraic.

This implies that the values of $a,b,c$ can affect the nature of the generating function. For example, consider simple random walks with step set $\{\mathrm{N, E, S, W}\}$. The functional equation reads
\begin{multline}
K(x,y)Q(x,y)=\frac1c + \frac1a(a-1 - ta\oly)Q(x,0) + \frac1b(b - 1 - tb\olx)Q(0,y) \\
+\frac{1}{abc}(ac+bc-ab-abc)Q(0,0)
\end{multline}
Using the algebraic kernel method, we find the RHS of the full-orbit sum to be
\begin{equation}
-\frac{\left(x^2-1\right) \left(y^2-1\right) z^2 [ (a b - a c - b c + a b c)Q(0,0)-a b]}{c x yK(x,y)}.
\end{equation}
Then, taking the $[y^0]$ part of both sides, the coefficient of $Q(0,0)$ in the RHS is 
\begin{equation}
z^2(a b - a c - b c + a b c)[y^0]\left(\frac{(x^2-1)(y^2-1)}{cxyK(x,y)}\right).
\end{equation}
Notice that $[y^0]\left(\frac{(x^2-1)(y^2-1)}{cxyK(x,y)}\right)$ is D-finite (since we take the positive part in $x$ of a rational function). If $ab-ac-bc+abc=0$, the solution is D-finite. Otherwise, as discussed above, it is D-algebraic. This is an improvement on the result obtained with the obstinate kernel method in \cite[Theorem 1]{beaton_exact_2019}.

We have just discussed the general properties of walks with and without boundary interactions. These simple inferences hold for most models we have studied, except Kreweras walks. Kreweras walks appear to be rather special. Kreweras walks without boundary interactions are algebraic. When interactions are introduced, the full-orbit sum (see \eqref{krew_fullOS_lhs_xposy0}) behaves more like a D-algebraic model, as the RHS is not $0$. Unlike the D-algebraic cases, however, since $Q(x,0)$ and $Q(0,x)$ both appear in the LHS of full-orbit sum, we still need the half-orbit sum to cancel $Q(0,x)$. The half-orbit sum then behaves more like an algebraic model. For example, observe that~\eqref{eqn:krew_Qx0_and_Qd1x} is in the same form as~\eqref{eqn:rk_halforbitsum_y0}, and the coefficient of $Q^\mathrm{d}_0(\olx)$ is a polynomial times $\sqrt{\Delta}$. However, since some terms in \eqref{eqn:krew_Qx0_and_Qd1x} are D-finite and not algebraic, the final result ends up being D-finite.

However, in another sense, Kreweras walks are different to the other D-algebraic models. The coefficient of $Q(0,0)$ in the full-orbit sum~\eqref{NCK} does not affect the properties of the solution -- it will always be D-finite.

We summarise this discussion in \cref{tab 1to4}. We know, for non-interacting models, the algebraic and D-algebraic cases can be distinguished just by examining the full-orbit sum. However, for interacting models, this rule is broken by Kreweras walks. Is it still the case that when the full-orbit sum equals $0$, the solution must be algebraic? And is there an easy way to determine when the solution is D-finite or D-algebraic?

It is of great interest to know whether other recently discovered techniques which do not use the kernel method -- for example, the elliptic functions of Kurkova and Raschel~\cite{kurkova_new_2015} -- may be applied to the problems of quarter-plane lattice paths with interacting boundaries.

\newcolumntype{L}[1]{>{\raggedright\let\newline\\\arraybackslash\hspace{0pt}}m{#1}}

\begin{table}[ht]
\small
\centering
\begin{tabular}{|L{2.5cm}|L{2.9cm}|L{2.9cm}|L{2.9cm}|L{2.9cm}|}
  \hline
 {Model} & Full-orbit sum & Coefficient of $Q_0^d(\olx)$ in $[y^0]$ of full-orbit sum & Coefficient of $Q_0^d(\olx)$ in $[y^0]$ of half-orbit sum & Generating function $Q(x,y,t)$\\
  \hline
  D-finite model without interactions & Linear equation of $Q(0,0)$ with coefficients in $\mathbb{R}(x,y,t)$ & Polynomial & Not needed & D-finite \\
  \hline
Algebraic model without interactions & 0 & 0 & $D\sqrt\Delta$ & Algebraic \\
  \hline
D-algebraic model with interactions & Linear equation of $Q(0,0)$ with coefficients in $\mathbb{R}(x,y,t)$ & Polynomial & Not needed & D-algebraic (may become D-finite for certain $a,b,c$) \\
   \hline
Reverse Kreweras walks with interactions & 0 & 0 & $D\sqrt\Delta$ & Algebraic \\
  \hline
Unsolved models with interactions & Linear equation of $Q(x,0),Q(0,0)$ with coefficients in $\mathbb{R}(x,y,t)$ & $C+D\sqrt\Delta$ & Not needed & Unclear  \\
  \hline
Kreweras walks with interactions & Linear equation of $Q(0,0)$ with coefficients in $\mathbb{R}(x,y,t)$ & Polynomial & $D\sqrt\Delta$ & D-finite for all $a,b,c$ \\
  \hline
\end{tabular}
\caption{A summary of the various model types. Here $C$ and $D$ are in $\mathbb{R}(x,t)$.}
\label{tab 1to4}
\end{table}

\subsection{Singularities and asymptotic behaviour}

We have so far only been concerned with the solutions to the generating functions for Kreweras and reverse Kreweras walks, without considering any physical or probabilistic meaning for the weights $a,b,c$. However, as mentioned in \cref{sec:intro}, there are close connections between lattice paths and models in statistical physics and probability theory. It is thus also worth investigating the asymptotic behaviour of the coefficients of the generating functions. This is ongoing work.

\section{Conclusion}\label{conclusion}

We have introduced a model of quarter-plane lattice paths with weights $a,b,c$ associated with visits to the two boundaries and the origin. We have then used the algebraic kernel method to solve the particular cases of reverse Kreweras walks and Kreweras walks. 

The final solution of reverse Kreweras walks is algebraic and we have directly solved it for all $(a,b,c)$ (without explicitly writing out the expression, due to its complexity). However, we were unable to solve Kreweras walks directly, and instead needed to make use of the fact that when only considering walks which start and end at the origin, Kreweras and reverse Kreweras generating functions are identical. The overall solution for Kreweras walks is D-finite.

There remain many other quarter-plane lattice path models which have not been solved, and it is unclear which methods may be useful when boundary weights are included. How the values of $a,b,c$ affect the asymptotic behaviour of these models is also an open question.

\section*{Acknowledgements}
NRB and ALO gratefully acknowledge support from the Australian Research Council.

\printbibliography

\end{document}